\pdfoutput=1
\RequirePackage{ifpdf}
\ifpdf 
\documentclass[pdftex]{sigma}
\else
\documentclass{sigma}
\fi

\newcommand{\fg}{\mathfrak g}

\newcommand{\cW}{{\mathcal W}}

\newcommand{\R}{\mathbb{R}}

\newcommand{\Z}{\mathbb{Z}}

\newcommand{\Zplus}{\Z_{\geq 0}}

\usepackage{mathrsfs}

\newcommand{\scF}{\mathscr{F}}
\newcommand{\scG}{\mathscr{G}}
\newcommand{\scA}{\mathscr{A}}

 \newcommand{\scC}{\mathscr{C}}

\newcommand{\scS}{\mathscr{S}}
\newcommand{\scT}{\mathscr{T}}

\DeclareMathAlphabet{\mathpzc}{OT1}{pzc}{m}{it}

\newcommand{\inv}{^{-1}}

\DeclareMathOperator{\supp}{supp}

\DeclareMathOperator{\id}{id}

\def \prod {\operatorname{prod}}

\newcommand{\cin}{C^\infty}

\DeclareMathOperator{\pr}{\mathsf{pr}}

\numberwithin{equation}{section}

\newtheorem{Theorem}{Theorem}[section]
\newtheorem*{Theorem*}{Theorem}
\newtheorem{Corollary}[Theorem]{Corollary}
\newtheorem{Lemma}[Theorem]{Lemma}

 { \theoremstyle{definition}
\newtheorem{Definition}[Theorem]{Definition}

\newtheorem{Example}[Theorem]{Example}
\newtheorem{Remark}[Theorem]{Remark}}
\newtheorem{notation}[Theorem]{Notation}

\begin{document}
\allowdisplaybreaks

\newcommand{\arXivNumber}{2307.10959}

\renewcommand{\PaperNumber}{093}

\FirstPageHeading

\ShortArticleName{Vector Fields and Flows on Subcartesian Spaces}

\ArticleName{Vector Fields and Flows on Subcartesian Spaces}

\Author{Yael KARSHON~$^{\rm ab}$ and Eugene LERMAN~$^{\rm c}$}

\AuthorNameForHeading{Y.~Karshon and E.~Lerman}

\Address{$^{\rm a)}$~School of Mathematical Sciences, Tel-Aviv University, Tel-Aviv, Israel}
\EmailD{\href{mailto:yaelkarshon@tauex.tau.ac.il}{yaelkarshon@tauex.tau.ac.il}}

\Address{$^{\rm b)}$~Department of Mathematics, University of Toronto, Toronto, Ontario, Canada}

\Address{$^{\rm c)}$~Department of Mathematics, University of Illinois at Urbana-Champaign,\\
\hphantom{$^{\rm c)}$}~Urbana, Illinois, USA}
\EmailD{\href{mailto:lerman@illinois.edu}{lerman@illinois.edu}}

\ArticleDates{Received July 21, 2023, in final form November 08, 2023; Published online November 16, 2023}

\Abstract{This paper is part of a series of papers on differential geometry of $C^\infty$-ringed spaces. In this paper, we study vector fields and their flows on a class of singular spaces. Our class includes arbitrary subspaces of manifolds, as well as symplectic and contact quotients by actions of compact Lie groups. We show that derivations of the $C^\infty$-ring of global smooth functions integrate to smooth flows.}

\Keywords{differential space; $C^\infty$-ring; subcartesian; flow}

\Classification{58A40; 46E25; 14A99}

\section{Introduction}

This paper is one in a series of papers on differential
geometry of $\cin$-ringed spaces. Two other papers in the series
are \cite{lerman:forms} and \cite{lerman:cartan}.

In this paper, we study vector fields and flows
on subcartesian spaces.\footnote{ For us a vector field is
a derivation of the $\cin$-ring of global smooth functions.
This is different from \'Sniatycki's definition; see Remark~\ref{rk:def-vf}.
To reduce ambiguity, in our formal statements
we say ``derivation'' rather than ``vector field''.}

Singular spaces, that is, spaces that are not manifolds, arise naturally
in differential geometry and in its applications to physics and engineering.
There are many approaches to differential geometry on singular spaces,
and there is a vast literature which we will not attempt to survey.
In this paper we use differential spaces in the sense of Sikorski
\cite{Sikorski} as our model of singular spaces.

\'Sniatycki's book \cite{Sn} contains a number of geometric tools
that apply to differential spaces.
\'Sniatycki is particularly interested in stratified spaces
that arise through symplectic reduction (see \cite{SL});
he provides a new perspective by viewing these spaces as
differential spaces.
In this paper, we respond to, and elaborate on,
\'Sniatycki's treatment of vector fields on differential spaces.
Specifically, for a derivation of (the ring of global smooth functions
on) a subcartesian space~$M$,
\'Sniatycki proves the existence and uniqueness
of smooth maximal integral curves
(see \cite[Theorem~3.2.1]{Sn} and \cite[Theorem~1]{Sn-orbits}),
but he does not discuss the flow as a map from a~subset of~$M \times \R$
to~$M$.

The main result of this paper is
Theorem~\ref{thm:main}, which roughly says the following:

\begin{Theorem*}Let $M$ be a differential
space embeddable in some Euclidean space $\R^N$ and
$v$ a vector field on $M$.
Assemble the maximal integral curves of $v$ into a 
flow $\Phi\colon \cW\to M$, where $\cW $ is a~subset $M\times \R$.
Then
the flow $\Phi\colon \cW\to M$ is smooth.
\end{Theorem*}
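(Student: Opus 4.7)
The plan is to derive smoothness of $\Phi$ from the classical smooth dependence of ODE solutions on initial conditions, by reducing to an ambient Euclidean setting via the embedding $\iota \colon M \hookrightarrow \R^N$. Since smoothness is local, it suffices to verify that $\Phi$ is smooth on some neighborhood of each point $(x_0, t_0) \in \cW$; translating in time reduces to the case $t_0 = 0$.

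The first step is an extension lemma. On some open neighborhood $U \subset \R^N$ of $\iota(x_0)$, I would construct a smooth vector field $\tilde v$ on $U$ such that for every $x \in M \cap \iota^{-1}(U)$ and every $\tilde f \in \cin(U)$, the identity $v(\tilde f|_M)(x) = \tilde v(\tilde f)(\iota(x))$ holds. For subcartesian $M$, the derivation $v$ determines a section $x \mapsto v_x$ of the restricted ambient tangent bundle $T\R^N|_{M\cap U}$, which is smooth along $M$; after shrinking $U$, this section extends to a smooth vector field on $U$ by a Whitney-type extension argument. Classical ODE theory then produces a flow $\tilde \Phi$ of $\tilde v$ that is smooth on some open subset of $U \times \R$ containing $(\iota(x_0), 0)$.

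The next step is to identify $\iota \circ \Phi$ with the restriction of $\tilde \Phi$ on $M \cap \iota^{-1}(U)$. For $x \in M$ near $x_0$, Śniatycki's theorem provides a smooth maximal integral curve $\gamma_x \colon I_x \to M$ of $v$ with $\gamma_x(0) = x$. Evaluating the defining condition of $\tilde v$ on the coordinate functions of $\R^N$ shows that $\iota \circ \gamma_x$ solves the ambient ODE $\dot \xi = \tilde v \circ \xi$. Uniqueness of smooth ODE solutions in $\R^N$ then forces $\iota(\Phi(x,t)) = \tilde \Phi(\iota(x), t)$ wherever both sides are defined, and in particular on a neighborhood of $(x_0, 0)$ in $\cW$.

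From here, smoothness of $\Phi$ as a morphism of differential spaces follows. Given $f \in \cin(M)$, locally extend it to $\tilde f \in \cin(U)$; then near $(x_0, 0)$ in $\cW$ one has $f \circ \Phi = \tilde f \circ \tilde \Phi$, a restriction of a smooth function defined on an open subset of $U \times \R$. By the definition of the subspace differential structure on $\cW \subset M \times \R$, this restriction is smooth, giving smoothness of $\Phi$. I expect the main obstacle to be the extension lemma together with the identification of $v$-integral curves in $M$ with genuine ambient integral curves of $\tilde v$; this is where the subcartesian structure is used essentially, and once the identification is in place, the smooth dependence of $\tilde \Phi$ on initial conditions transfers directly to $\Phi$.
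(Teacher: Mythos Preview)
Your overall strategy---extend $v$ to an ambient vector field, identify integral curves of $v$ with ambient integral curves, and read off smoothness of $\Phi$ from the classical smooth dependence on initial conditions---is exactly the paper's. The paper's proof differs from yours in one structural choice, and that choice is what makes the argument go through cleanly, whereas your version has a genuine gap.

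The gap is the sentence ``translating in time reduces to the case $t_0 = 0$.'' Your extension $\tilde v$ lives only on a neighbourhood $U$ of the single point $\iota(x_0)$, so the ambient flow $\tilde\Phi$ and the identification $\iota\circ\Phi = \tilde\Phi\circ(\iota\times\id)$ are only available for short times near $(x_0,0)$. To pass from this to smoothness near $(x_0,t_0)$ you would need something like $\Phi(x,t)=\Phi(\Phi(x,t_0),t-t_0)$ together with smoothness of $x\mapsto\Phi(x,t_0)$; but the latter is precisely what you are trying to prove. One can salvage this with a group-law plus open--closed propagation argument along the compact arc $\gamma_{x_0}([0,t_0])$, but that is real work, not a one-line reduction, and it is not in your write-up.

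The paper avoids this entirely by extending $v$ \emph{globally}: since $v(x_i|_M)\in\scF$ for each coordinate function $x_i$, Lemma~\ref{lem:2.27} gives $b_i\in\cin(U_i)$ on open $U_i\supset M$ with $b_i|_M=v(x_i|_M)$, and $V=\sum_i b_i\,\partial_i$ is a vector field on $U=\bigcap_i U_i\supset M$. Then the maximal integral curve of $v$ through any $p\in M$ is the restriction of the maximal integral curve of $V$ through $p$ (this is Lemma~\ref{lem:B.4} and Corollary~\ref{cor:3.20}), so $\Phi=\Psi|_{\cW}$ where $\Psi$ is the full ambient flow of $V$. Smoothness then follows in one step for every $(p,t)\in\cW$, with no time-translation needed. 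Your final paragraph, checking that the subspace structure on $\cW\subset M\times\R$ agrees with that from $U\times\R$, is correct and matches the paper's use of Lemma~\ref{lem:2.33} and Corollary~\ref{cor:2.27}.
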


Thanks to an analogue of the Whitney embedding theorem for
differential spaces, embeddability in a Euclidean space is a fairly
mild assumption on a differential space that is {\em locally}
embeddable in a Euclidean space, i.e., the space that is {\it subcartesian}
(see Definition~\ref{def:subcart}). See~\cite{BM,Motreanu} or
\cite{Kowalczyk} for various versions of the Whitney embedding
theorem for subcartesian spaces. On the other hand, if a differential
space is not subcartesian, then the flow of a vector field may not
exist at all, see \cite[Example~2, Section~32.12]{KM}.

Theorem~\ref{thm:main} relies on the existence and uniqueness of
maximal integral curves. A few years ago \'Sniatycki gave a proof of
existence and uniqueness of integral curves of vector fields on
arbitrary subcartesian differential spaces (see \cite[Theorem~3.2.1]{Sn}).
In a later paper~\cite{CS},
Cushman and \'{S}niatycki have a similar theorem, Theorem~5.3,
and they say
that ``Theorem~5.3 replaces [5, Theorem~3.2.1], which is incorrect''
(their [5] is our \cite{Sn}). However, it seems to us that there is
nothing wrong with \'Sniatycki's Theorem~3.2.1, certainly not with its
statement.
To make sure, we provide a self-contained proof of existence and
uniqueness of integral curves,
under the mild assumptions that imply embeddability, see Corollary~\ref{cor:B.5}.

In a later paper \cite{KL}, we remove the mild assumptions
that imply embeddability.
These assumptions are indeed mild:
``reasonable'' subcartesian spaces are embeddable.
And removing these assumptions has a price; the proof becomes
more involved: for embeddable spaces, we can rely on the integration
of vector fields on open subsets of Euclidean spaces;
for not-necessarily-embeddable spaces,
we need to imitate
the proof of integration of vector fields on manifolds.

\subsection*{Organization of the paper}

In Section~\ref{sec:ds}, we recall the definition and some properties of differential spaces.
This material is standard.
One novelty is that we explicitly mention $\cin$-rings.
In Section~\ref{sec:4}, we prove
the existence and uniqueness of integral curves of derivations on
embeddable
subcartesian spaces and use this result to prove the main theorem of
the paper. In Appendix~\ref{app:derivations}, we provide a proof of a
special case of a theorem of Yamashita
\cite[Theorem~3.1]{Yamashita}. Namely, we prove that any $\R$-algebra derivation of a
point-determined $\cin$-ring is automatically a $\cin$-ring derivation.
This fact is used in our proof of the
existence and uniqueness of integral curves of derivations.

\subsection*{Assumptions} Throughout the paper, ``manifold'' means
``smooth (i.e., $\cin$) manifold''. All manifolds are assumed to be second
countable and Hausdorff.

\section{Differential spaces} \label{sec:ds}

In this section, we recall the definition and some properties of
differential spaces in the sense of Sikorski. It will be convenient
to recall the notion of a $\cin$-ring first. The definition below is
not standard, but it is easier to understand on the first pass. It is
equivalent to Lawvere's original definition; see \cite{Joy}.

\begin{Definition}\label{def:cring1}
A {\it $C^\infty$-ring} is a {set} $\scC$, equipped with operations
\[
g_\scC\colon\ \scC^m\to \scC
\]
for all $m \in \Zplus$ and all $g\in C^\infty (\R^m)$,
such that the following holds:
\begin{itemize}\itemsep=0pt
\item
For all $n,m\in \Zplus$,
all $f_1, \dots, f_m\in C^\infty (\R^n)$ and $g\in C^\infty(\R^m)$,
\[
({g\circ(f_1,\dots, f_m)})_\scC (c_1,\dots, c_n) =
g_\scC({(f_1)}_\scC(c_1,\dots, c_n), \dots, {(f_m)}_\scC(c_1,\dots, c_n))
\]
for all $(c_1, \dots, c_n) \in \scC^n$.
\item
For every $m >0$ and for every coordinate function $x_j\colon \R^m\to \R$,
$1\leq j\leq m$,
\[
(x_j)_\scC (c_1,\dots, c_m) = c_j.
\]
\end{itemize}
If $m=0$, then $\scC^0$ is a singleton $\{*\}$. Similarly, $\cin\big(\R^0\big) \simeq
\cin(0)\simeq \R$. Thus 0-ary operations on $\scC$ are maps
$g_\scC\colon \{*\}\to \scC$, one for every $g\in \R$. Since any
map $h\colon\{*\}\to \scC$ can be identified with $h(*)\in \scC$, we
identify the 0-ary operation corresponding to $g\in \R$ with an
element of $\scC$, which we denote by $g_\scC$.
\end{Definition}

\begin{Example} \label{ex:2.1new}
Let $M$ be a $\cin$-manifold and $\cin(M)$ the set of
smooth (real-valued) functions. Then $\cin(M)$, equipped with the usual composition
operations
\[
g_{\cin(M)} (a_1,\dots, a_m) := g\circ (a_1,\dots, a_m),
\]
is a $\cin$ ring.
\end{Example}

\begin{Example} \label{ex:2.1new+}
Let $M$ be a topological space and $C^0(M)$ the set of
continuous real-valued functions. Then $C^0(M)$, equipped with the usual composition
operations
\[
g_{\cin(M)} (a_1,\dots, a_m) := g\circ (a_1,\dots, a_m),
\]
is also a $\cin$ ring.
\end{Example}

\begin{Definition} A nonempty subset
$\scC$ of a $\cin$-ring $\scA$ is a {\it $\cin$-subring} if $\scC$ is closed under the operations of $\scA$.
\end{Definition}

\begin{Example}
If $M$ is a manifold, then $\cin(M)$ is a $\cin$-subring of $C^0(M)$.
\end{Example}

We also need to recall the notion of an initial topology.
\begin{Definition}
Let $X$ be a set and $\scF$ a set of maps from $X$ to various
topological spaces. The smallest topology on $X$ making all functions
in $\scF$ continuous is called {\it initial}.

In particular, a collection of real-valued functions $\scF$ on a set
$X$ uniquely defines an initial topology on $X$ (we give the real line
$\R$ the standard topology, of course).
\end{Definition}

Next we define differential spaces in the sense of Sikorski.
The definition below agrees with the one in \cite{Sn}.
Some papers define differential spaces as ringed spaces; see
\cite{PSHM}, for example.

\begin{Definition} \label{def:sikorski} A {\it differential space}
(in the sense of Sikorski) is a pair $(M, \scF)$,
where $M$ is a topological space
and $\scF$ is a (nonempty) set of real-valued functions on $M$,
subject to the following three conditions:
\begin{enumerate}\itemsep=0pt
\item[(1)] 
The topology on $M$ is the smallest topology making every function
in $\scF$ continuous, i.e., it is the initial topology
defined by the set $\scF$.
\item[(2)] 
For any nonnegative integer $m$,
any smooth function $g\in \cin(\R^m)$, and any $m$-tuple
$f_1,\dots, f_m\in \scF$,
the composite $g\circ (f_1,\dots, f_m)$ is in~$\scF$. %

\item[(3)] 
Let $g\colon M\to \R$ be a function. Suppose that
for each point $p$ of $M$
there exist a neighborhood $U$ of $p$
and a function $a \in \scF$ such that $g|_U = a|_U$. Then
 the function $g$ is in~$\scF$.
\end{enumerate}
We refer to $\scF$ as a {\it differential structure} on $M$.
\end{Definition}

\begin{Remark}\quad
 \begin{itemize}\itemsep=0pt
\item
We think of the set of functions $\scF$ on a differential space $(M, \scF)$
as ``smooth functions by fiat''. (Also, see Remark~\ref{smooth to R}.)

\item
 We may refer to a differential space $(M,\scF)$ simply as $M$.

\item
Condition (2) says that
$\scF$ is a $\cin$-ring with the operations $g_\scF\colon \scF^m\to \scF$
given by composition
\[
g_\scF(f_1,\dots, f_m): = g\circ (f_1, \dots, f_m).
\]
Note that since $\cin\big(\R^1\big)$ includes constant functions,
(2) implies that all constant functions are in $\scF$.
Recall that 0-ary operations on a $\cin$-ring are indexed by constants
$g\in \cin\big(\R^0\big)\simeq \R$. Given $g\in \cin\big(\R^0\big)$ we define the
operation $g_\scF\colon \scF^0 =\{*\} \to \scF$ by setting $g_\scF(*)$ to be
the constant function on $M$ taking the value $g$ everywhere. We know
that such a constant function has to be in $\scF$.
\end{itemize}
\end{Remark}

\begin{Remark}
In the literature, the term ``differential space'' is used for a variety
of mathematical objects, some of which are related to Sikorski's
differential spaces, and some that are not related at all.
\end{Remark}

\begin{Example}
\label{ex:M}
Let $M$ be a manifold (second countable and Hausdorff).
Then the pair $(M, \cin(M))$, where $\cin(M)$ is the set of $\cin$ functions,
is a differential space in the sense of Definition~\ref{def:sikorski}.
The main point to check is that the topology on $M$ coincides with the
smallest topology making all the functions in
$\cin(M)$ continuous.
This follows from the existence of bump functions on manifolds
and from Lemma~\ref{lem:bump} below.
Alternatively, it follows from a~theorem of
Whitney by which any closed subset of a manifold
$M$ is the zero set of a smooth function. See, for example,
\cite[Theorem~2.29]{Lee}.
\end{Example}

\begin{Definition}
Given a manifold $M$ we refer to the $\cin$-ring $\cin(M)$ of smooth
functions on~$M$ as the {\it standard} differential structure.
\end{Definition}

\begin{Example}
Let $M$ be a manifold.
Then the set $C^0(M)$ of {\em
 continuous} function on $M$ is also a differential structure.
Unless $M$ is discrete, the $\cin$-ring $C^0(M)$ is
bigger than $\cin(M)$.
\end{Example}

\begin{Definition} \label{def:a.30} Let $(M,\scT)$ be a topological space,
 $C\subset M$ a closed set and $x\in M\setminus C$ a~point. A
 {\it bump function} (relative to $C$ and $x$) is a continuous
 function $\rho\colon M\to [0,1]$ so that $(\supp \rho) \cap C = \varnothing $
 and $\rho$ is identically 1 on a neighborhood of $x$.
\end{Definition}

\begin{Definition} \label{def:a.31} Let $(M,\scT)$ be a topological
 space and $\scF \subseteq C^0(M,\R)$ a collection of continuous
 real-valued functions on $M$. The topology $\scT$ on $M$ is {\it
 $\scF$-regular} iff for any closed subset~$C$ of $M$ and any point
 $x\in M\setminus C$ there is a bump function $\rho\in \scF$
 with $\supp \rho \subset M\setminus C$ and $\rho$ identically 1
 on a neighborhood of $x$.
\end{Definition}

\begin{Lemma}\label{lem:bump}
Let $(M,\scT)$ be a topological space and $\scF \subset
 C^0(M,\R)$ a $\cin$-subring. Then $\scT$ is the smallest topology making all
 the functions in $\scF$ continuous if and only if the topology $\scT$ is $\scF$-regular.
\end{Lemma}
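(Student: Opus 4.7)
The plan is to prove the two implications separately. Let $\scT_\scF$ denote the initial topology on $M$ defined by $\scF$. Since every $f\in\scF$ is continuous with respect to $\scT$, one automatically has $\scT_\scF\subseteq\scT$, so the lemma reduces to showing that the reverse inclusion $\scT\subseteq\scT_\scF$ is equivalent to $\scF$-regularity of $\scT$.

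First I would prove that $\scF$-regularity implies $\scT\subseteq\scT_\scF$. Given an arbitrary $U\in\scT$ and a point $x\in U$, I apply the regularity hypothesis with $C:=M\setminus U$ to obtain $\rho_x\in\scF$ that is identically $1$ on an open neighborhood of $x$ and satisfies $\supp\rho_x\subseteq U$. The set $V_x:=\rho_x^{-1}((1/2,\infty))$ lies in $\scT_\scF$, contains $x$, and is contained in $\{\rho_x\neq 0\}\subseteq\supp\rho_x\subseteq U$; hence $U=\bigcup_{x\in U}V_x$ belongs to $\scT_\scF$.

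For the converse I would assume $\scT=\scT_\scF$ and start with a closed set $C$ and a point $x\in M\setminus C$. Since $M\setminus C$ is $\scT_\scF$-open, the subbasis description of the initial topology produces finitely many $f_1,\dots,f_n\in\scF$ and open sets $V_i\subseteq\R$ with $x\in\bigcap_{i=1}^n f_i^{-1}(V_i)\subseteq M\setminus C$. For each $i$ I choose a smooth function $\chi_i\colon\R\to[0,1]$ with compact support inside $V_i$ that equals $1$ on a neighborhood of $f_i(x)$. The product $g(t_1,\dots,t_n):=\chi_1(t_1)\cdots\chi_n(t_n)$ lies in $\cin(\R^n)$, so because $\scF$ is a $\cin$-subring of $C^0(M,\R)$, the function $\rho:=g_\scF(f_1,\dots,f_n)=(\chi_1\circ f_1)\cdots(\chi_n\circ f_n)$ is in $\scF$. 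It is identically $1$ on an open neighborhood of $x$, and the closed set $\bigcap_{i=1}^n f_i^{-1}(\supp\chi_i)$ contains $\{\rho\neq 0\}$ and is contained in $\bigcap_i f_i^{-1}(V_i)\subseteq M\setminus C$, so $\supp\rho\cap C=\varnothing$. The only real content of the argument is this use of the $\cin$-subring axiom to manufacture bump functions in $\scF$ from ordinary smooth bumps in $\cin(\R^n)$; the rest is routine manipulation of initial topologies.
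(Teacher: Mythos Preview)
Your proof is correct and follows essentially the same approach as the paper's: both reduce to showing $\scT\subseteq\scT_\scF$ is equivalent to $\scF$-regularity, cover an arbitrary $\scT$-open set by preimages of intervals under the bump functions in one direction, and in the other direction use the sub-basis description of $\scT_\scF$ together with the $\cin$-subring axiom to compose $(f_1,\dots,f_n)$ with a smooth bump on $\R^n$. The only cosmetic differences are that the paper uses open intervals $I_i$ and a single bump $\rho\in\cin(\R^k)$ supported in $I_1\times\cdots\times I_k$, whereas you use general open $V_i$ and a product of one-variable bumps; these are equivalent choices.
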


\begin{proof}
Let $\scT_\scF$ denote the smallest topology making all the functions
in $\scF$ continuous. The set
\[
 \scS:= \big\{ f^{-1}(I) \mid f \in \scF, \text{ $I$ is an open
 interval} \big\}
\]
is a sub-basis for $\scT_\scF$. Since all the functions in $\scF$ are
continuous with respect to $\scT$, $\scT_\scF\subseteq \scT$.
Therefore, it is enough to argue that $\scT\subseteq \scT_\scF $ if and
only if $\scT$ is $\scF$-regular.

($\Rightarrow$)\quad Suppose $\scT \subseteq \scT_\scF$. Let
$C\subset M$ be $\scT$-closed and $x$ a point in $M$ which is not in
$C$. Then~$M \setminus C$ is $\scT$-open. Since $\scT \subseteq
\scT_\scF$ by assumption, $M \setminus C$ is in $\scT_\scF$.
Then there exist functions $h_1,\dots,h_k \in \scF$
and open intervals $I_1,\dots,I_k$ such that
$x \in \cap_{i=1}^k h_i^{-1}(I_i) \subset M \setminus C$. There is a
$\cin$ function $\rho\colon \R^k \to [0,1]$
with $\supp \rho \subset I_1 \times \dots \times I_k$ and the
property that
$\rho = 1$ on a neighborhood of $(h_1(x),\dots,h_k(x))$ in $\R^k$.
Then $\tau: = \rho \circ (h_1,\dots,h_k)$ is in $ \scF$, since $\scF$ is a~$\cin$-subring of $C^0(M)$. The function $\tau$ is a~desired bump
function.

($\Leftarrow$)\quad Suppose the topology $\scT$ is
$\scF$-regular. Let $U\in \scT$ be an open set. Then $C= M\setminus
U$ is closed. Since $\scT$ is $\scF$-regular, for any point $x\in U$ there
is a bump function $\rho_x \in \scF$ with $\supp \rho_x \subset U$ and
$\rho_x $ is identically $1$ in a neighborhood of $x$. Then $\rho_x\inv ((0,\infty)) \subset U$
and~${\rho_x\inv ((0,\infty)) \in \scT_\scF}$. It follows that
\[
U = \bigcup_{x\in U} \rho_x\inv ((0,\infty)) \in \scT_\scF.
\]
Since $U$ is an arbitrary element of $\scT$, $\scT \subseteq \scT_\scF$.
\end{proof}

\begin{Definition} \label{def:smooth map}
A {\it smooth map} from a differential space $(M,\scF_M)$
to a differential space $(N, \scF_N)$ is a function $\varphi\colon M\to N$
such that for every $f\in \scF_N$ the composite $f\circ \varphi$
is in~$\scF_M$.
\end{Definition}

\begin{Remark} \label{smooth to R}
Given a differential space $(M,\scF)$,
the set $\scF$ coincides with the set of smooth
maps $(M, \scF) \to (\R,\cin(\R))$.
\end{Remark}

\begin{Remark}
A map between two manifolds is smooth in the usual sense if and only if
it is a~smooth map between the corresponding differential spaces
(when both manifolds are given the standard differential structures).
\end{Remark}

\begin{Remark}
It is easy to see that the composite of two smooth maps between
differential spaces is again smooth. It is even easier to see that the
identity map on a differential space is smooth. Consequently,
differential spaces form a category.
\end{Remark}

\begin{Definition}\label{def:diffeomorphism}
A smooth map between two differential spaces is a {\it diffeomorphism}
if it is invertible and the inverse is smooth.

Equivalently a smooth map is a diffeomorphism iff it is an isomorphism
in the category of differential spaces.
\end{Definition}

\begin{Remark}
Every smooth map of differential spaces is continuous;
this follows from Definition~\ref{def:sikorski}\,(1).
\end{Remark}

\begin{Remark} \label{rmrk:diff_str-R-alg}
Any differential structure $\scF$ is an $\R$-subalgebra of $C^0(M)$:
for any $f_1, f_2 \in \scF$, $\lambda, \mu\in \R$
\begin{align*}
&\lambda f_1 + \mu f_2= g\circ (f_1, f_2), \quad \textrm{where}\quad
 g(x,y) := \lambda x + \mu y\in \cin\big(\R^2\big), \\
&f_1f_2= h\circ (f_1, f_2), \quad \textrm{where}\quad
 h(x,y) := x y\in \cin\big(\R^2\big).
\end{align*}
\end{Remark}

\begin{Remark}\label{rmrk:r-alg}
Any $\cin$-ring is an $\R$-algebra (more
precisely: has an underlying $\R$-algebra structure). The binary
operations $+$ and $\cdot$ come from the functions $h(x,y) = x+y$ and
${g(x,y) =xy}$ respectively. The scalars come from the 0-ary operations.

We will not notationally
distinguish between a $\cin$-ring and the corresponding (underlying) $\R$-algebra.
\end{Remark}

\begin{Definition}
A differential structure $\scF$ on a set $M$ is {\it generated} by a subset
$A\subseteq \scF$ if $\scF$ is the smallest differential structure
containing the set $A$. That is, if $\scG$ is a differential
structure on $M$ containing $A$, then $\scF\subseteq \scG$.
\end{Definition}

\begin{Lemma}\label{rmrk:2.22}
Given a collection $A$ of
real-valued functions on a set $M$ there is a differential structure $\scF$
on $M$ generated by $A$. %
The initial topology for $\scF$
is the initial topology for the set~$A$. %
\end{Lemma}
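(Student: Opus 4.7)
The plan is to construct $\scF$ in two stages, starting from $A$, and then to verify the three axioms of a differential structure together with the minimality claim.

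First, I would equip the set $M$ with the initial topology $\tau_A$ defined by $A$, and let $\scF_0$ consist of all functions of the form $g\circ(a_1,\dots,a_m)$ with $m\ge 0$, $a_1,\dots,a_m\in A$, and $g\in\cin(\R^m)$. (The case $m=0$ produces the constant functions.) A short computation shows that $\scF_0$ is closed under smooth composition: if $f_j=g_j\circ(a_{j,1},\dots,a_{j,m_j})$ for $j=1,\dots,k$, then collecting all the $a_{j,i}$ into one tuple $(b_1,\dots,b_N)$ in $A^N$ and precomposing with the obvious smooth projections rewrites $g\circ(f_1,\dots,f_k)$ as $h\circ(b_1,\dots,b_N)$ for some $h\in\cin(\R^N)$. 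Every element of $\scF_0$ is $\tau_A$-continuous because each $a_i\in A$ is, and smooth functions on $\R^m$ are continuous.

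Second, I would define
\[
\scF:=\{h\colon M\to\R\mid\text{for every }p\in M\text{ there is a }\tau_A\text{-open }U\ni p\text{ and }a\in\scF_0\text{ with }h|_U=a|_U\}.
\]
The three axioms of Definition~\ref{def:sikorski} are then checked as follows. Axiom (2) (closure under smooth composition) holds for $\scF$: if $f_1,\dots,f_m\in\scF$ and $p\in M$, pick a common neighborhood $U$ of $p$ on which $f_i=a_i$ with $a_i\in\scF_0$; then $g\circ(f_1,\dots,f_m)=g\circ(a_1,\dots,a_m)$ on $U$, and the right-hand side lies in $\scF_0$ by the previous paragraph. Axiom (3) (the sheaf/locality condition) is immediate, since being locally in $\scF$ is, by one further refinement of neighborhoods, the same as being locally in $\scF_0$. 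For axiom (1) I need $\tau_\scF=\tau_A$: every element of $\scF$ is $\tau_A$-continuous because it is locally equal to $\tau_A$-continuous functions from $\scF_0$, giving $\tau_\scF\subseteq\tau_A$; and since $A\subseteq\scF_0\subseteq\scF$, the reverse inclusion $\tau_A\subseteq\tau_\scF$ is automatic. Thus $(M,\scF)$ is a differential space and its initial topology equals $\tau_A$, which proves the second assertion of the lemma.

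For minimality, let $\scG$ be any differential structure on $M$ containing $A$. Axiom (2) applied to $\scG$ forces $\scF_0\subseteq\scG$, and axiom (3) applied to $\scG$ (using that the topology making $\scG$ continuous is at least as fine as $\tau_A$, so $\tau_A$-open neighborhoods are open for the topology carried by $\scG$) forces $\scF\subseteq\scG$. Hence $\scF$ is the smallest differential structure containing $A$, i.e., the one generated by $A$.

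The only mildly delicate point is the interaction between the two topologies in the minimality argument: a~function that is locally in $\scF_0$ with respect to $\tau_A$ should also be seen as locally in $\scF_0$ with respect to the (possibly finer) topology of any differential structure $\scG\supseteq A$. This is harmless because $\tau_A$ is coarser than $\tau_\scG$, so every $\tau_A$-open neighborhood is $\tau_\scG$-open, but it is the step worth stating carefully.
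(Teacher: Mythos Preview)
Your argument is correct. The two-stage construction---first closing $A$ under smooth compositions to obtain $\scF_0$, then taking the $\tau_A$-local closure---is the standard one, and your verification of the three axioms and of minimality is sound; in particular, you handle the topology comparison in the minimality step properly.

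The paper itself does not give a proof of this lemma: it simply cites \cite[Theorem~2.1.7]{Sn}. So your write-up supplies what the paper omits. The construction you give is presumably the same as, or close to, the one in \'Sniatycki's book, so there is no genuine methodological divergence to compare---you are filling in a reference rather than offering an alternative route.
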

\begin{proof}
See \cite[Theorem~2.1.7]{Sn}.
\end{proof}

\begin{notation}
We write $\scF =\langle A \rangle$ if the differential structure $\scF$ is generated by the set $A$.
\end{notation}
\begin{Definition}
Let $(M,\scF)$ be a differential space and $N\subseteq M$ a subset.
The {\it subspace differential structure} $\scF_N$ on $N$, also
known as the {\it induced differential structure}, is the differential
structure on $N$ generated by the set $A$ of restrictions to $N$ of the
functions in $\scF$:
\[
A =\{g\colon\ N\to \R \mid g = f|_N \textrm{ for some } f\in\scF\}.
\]
\end{Definition}

\begin{Definition}
A smooth map $f\colon (M,\scF_M) \to (N, \scF_N)$ between two differential
spaces is an {\it embedding} if $f$ is injective and the induced map $f\colon (M, \scF_M)
\to (f(M)$, $\langle \scF_N|_{f(M)}\rangle)$ from $M$ to its image (with
the subspace differential structure) is a diffeomorphism.
\end{Definition}

\begin{Lemma} \label{lem:2.24}
Let $(M,\scF)$ be a differential space and $(N, \scF_N)$ a subset of
$M$ with the induced/subspace differential structure. Then the
smallest
topology on $N$ making all the functions of $\scF_N$ continuous
agrees with the subspace topology on $N$ coming from the inclusion
$i\colon N\hookrightarrow M$.
\end{Lemma}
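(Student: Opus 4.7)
The plan is to reduce the statement to a general fact about initial topologies and subspaces, using Lemma~\ref{rmrk:2.22} to replace $\scF_N$ by a generating set of functions that is easy to work with.

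By Lemma~\ref{rmrk:2.22}, the initial topology on $N$ associated to the subspace differential structure $\scF_N = \langle A \rangle$, with $A = \{ f|_N : f \in \scF\}$, coincides with the initial topology on $N$ associated to $A$ itself. So it suffices to show that the initial topology on $N$ with respect to the restrictions $\{f|_N : f \in \scF\}$ agrees with the subspace topology inherited from $M$.

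For this, I would recall that the topology on $M$, being initial with respect to $\scF$, admits the collection
\[
\scS_M := \{ f^{-1}(I) : f \in \scF,\ I \subset \R \text{ an open interval} \}
\]
as a subbasis. Consequently, a subbasis for the subspace topology on $N$ is given by
\[
\{ V \cap N : V \in \scS_M \} = \{ f^{-1}(I) \cap N : f \in \scF,\ I \text{ open interval} \} = \{ (f|_N)^{-1}(I) : f \in \scF,\ I \text{ open interval} \}.
\]
But this last collection is exactly a subbasis for the initial topology on $N$ defined by the family $\{ f|_N : f \in \scF\} = A$. Since two topologies agreeing on a subbasis are equal, the two topologies on $N$ coincide.

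I do not anticipate any real obstacle here: the content is purely point-set-topological, once one has Lemma~\ref{rmrk:2.22} available to pass from $\scF_N$ to its generating set $A$. The only minor subtlety is to remember that the initial topology is generated by preimages of open intervals (or more generally, open sets in $\R$) under the given functions, so that taking intersections with $N$ commutes cleanly with taking preimages under the restriction maps.
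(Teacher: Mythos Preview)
Your proof is correct and follows essentially the same approach as the paper's proof: use Lemma~\ref{rmrk:2.22} to reduce the initial topology for $\scF_N$ to the initial topology for the generating set $A=\scF|_N$, and then identify the latter with the subspace topology. The only difference is that you spell out the subbasis comparison explicitly, whereas the paper simply asserts that the initial topology for $\scF|_N$ is the subspace topology.
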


\begin{proof} The initial topology for the set $\scF|_N$ of
 generators of $\scF_N$ is the subspace topology. Consequently, the
 initial topology for $\scF_N = \langle \scF|_N\rangle$ is also the
 subspace topology (cf.\ Lem\-ma~\ref{rmrk:2.22}).\looseness=-1
\end{proof}

\begin{Remark} The subspace differential structure $\scF_N$ can be
 given a fairly explicit description:
\begin{align*}
\scF_N= \big\{& f\colon N\to \R\mid \textrm{ there is a collection of sets }
\{U_i\}_{i\in I}, \text{ open in $M$, with $\textstyle \bigcup _i
U_i \supset N$} \\
& \textrm{and a collection } \{g_i\}_{i\in I}
\subseteq \scF \textrm{ such that } f|_{N\cap U_i} =
g_i|_{N\cap U_i } \textrm { for all indices } i\big\}.
\end{align*}
\end{Remark}

\begin{Remark}
Let $(M,\scF)$ be a differential space and $(N, \scF_N)$ a subset of
$M$ with the induced/subspace differential structure. Then the
inclusion map $i\colon N\hookrightarrow M$ is smooth since for any $f\in
\scF$, $f\circ i = f|_N \in \scF_N$ by definition of $\scF_N$.

The subspace differential structure $\scF_N$ is the {\em smallest}
differential structure on $N$ making the inclusion $i\colon N\to M$ smooth.
This is because any differential structure $\scG$ on $N$ making
$i\colon (N,\scG) \to (M, \scF)$ smooth must contain the set $\scF|_N$.
\end{Remark}

\begin{Lemma} \label{lem:diff_subspaces_embed}
Let $(M,\scF)$ be a differential space and $(N, \scF_N)$ a subset of
$M$ with the induced/subspace differential structure. For any
differential space $(Y,\scG)$ and for any smooth map $\varphi\colon Y\to M$
that factors through the inclusion $i\colon N\to M$ {\rm (}i.e., $\varphi(Y)
\subset N)$, the map $\varphi\colon (Y,\scG)\to (N, \scF_N)$ is smooth.
\end{Lemma}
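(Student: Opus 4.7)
The plan is to verify the defining property of smoothness: for every $f \in \scF_N$, the composite $f \circ \varphi\colon Y \to \R$ should lie in $\scG$. The key tool is the locality axiom~(3) of Definition~\ref{def:sikorski} applied to the target differential space $(Y, \scG)$.

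I would begin by invoking the explicit description of $\scF_N$ recorded in the remark immediately preceding the lemma: any $f \in \scF_N$ admits a family $\{U_i\}_{i \in I}$ of open subsets of $M$ with $\bigcup_i U_i \supset N$ and functions $g_i \in \scF$ such that $f|_{N \cap U_i} = g_i|_{N \cap U_i}$ for each $i$. Next I would pull this cover back to $Y$: since $\varphi$ is smooth into $M$, it is continuous, so each $\varphi^{-1}(U_i)$ is open in $Y$, and the hypothesis $\varphi(Y) \subset N \subset \bigcup_i U_i$ ensures that these open sets cover $Y$. On $\varphi^{-1}(U_i)$ the composites $f \circ \varphi$ and $g_i \circ \varphi$ agree, because for $y$ in this set $\varphi(y) \in N \cap U_i$, where $f$ coincides with $g_i$. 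Since $g_i \in \scF$ and $\varphi\colon Y \to M$ is smooth, $g_i \circ \varphi \in \scG$; thus $f \circ \varphi$ agrees locally with elements of $\scG$, and axiom~(3) of Definition~\ref{def:sikorski} for $(Y, \scG)$ yields $f \circ \varphi \in \scG$, as required.

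The only delicate step is the first one, which relies on the explicit formula for $\scF_N$. A route avoiding that formula would be to set $\mathcal{H} := \{f\colon N \to \R \mid f \circ \varphi \in \scG\}$ and argue that $\mathcal{H}$ contains $\scF|_N$ (by smoothness of $\varphi$ into $M$) and is closed under both $\cin$-composition and the locality axiom (inherited from the corresponding axioms for $\scG$), so that by minimality $\scF_N = \langle \scF|_N\rangle \subseteq \mathcal{H}$. The potential obstacle with this alternative is verifying the topological clause of Definition~\ref{def:sikorski} for $\mathcal{H}$, which is why I would prefer the direct route via the explicit description.
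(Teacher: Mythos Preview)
Your primary argument is correct. You unpack the explicit description of $\scF_N$ (the remark preceding the lemma), pull back the covering $\{U_i\}$ along the continuous map $\varphi$, observe that $f\circ\varphi$ agrees with $g_i\circ\varphi\in\scG$ on $\varphi^{-1}(U_i)$, and conclude by the locality axiom~(3) for $(Y,\scG)$. Each step is sound.

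The paper takes the route you sketch as your \emph{alternative}: it simply notes that $\varphi^*(\scF|_N)\subseteq\scG$ and then invokes the fact that $\scF_N=\langle\scF|_N\rangle$ together with $\scG$ being a differential structure to conclude $\varphi^*\scF_N\subseteq\scG$. In effect the paper is asserting, in one line, the minimality principle you formulate with $\mathcal{H}=\{f\colon N\to\R\mid f\circ\varphi\in\scG\}$. Your worry about axiom~(1) for $\mathcal{H}$ is understandable but not actually an obstacle: the construction of $\langle A\rangle$ (Lemma~\ref{rmrk:2.22}) produces a set of functions using only closure under $\cin$-composition and locality, so once $\mathcal{H}\supseteq A$ and $\mathcal{H}$ is closed under~(2) and~(3) one already has $\langle A\rangle\subseteq\mathcal{H}$ as sets; the topology axiom is a property of $\langle A\rangle$, not a constraint on $\mathcal{H}$. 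So both routes work. Your direct approach via the explicit description has the virtue of being fully self-contained and making the role of the locality axiom for $\scG$ explicit; the paper's one-line generation argument is shorter but relies on the reader supplying exactly the verification you carry out.
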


\begin{proof}
We need to show that $\varphi^* h \equiv h\circ \varphi \in \scG$ for
any $h\in \scF_N$. For any $f\in \scF$,
\[
\scG \ni f\circ \varphi = f\circ i \circ \varphi = \varphi^*(f|_N).
\]
Consequently, $\varphi^* (\scF|_N) \subseteq \scG$. Since $\scF|_N$
generates $\scF_N$ and since $\scG$ is a differential structure, we
must have $\varphi^* \scF_N \subseteq \scG$ as well.
\end{proof}

\begin{Corollary} \label{cor:2.27}
Let $(M, \scF)$ be a differential space and
$K \subseteq N \subseteq M$
subsets. Then
\[
\langle \scF|_K\rangle = \langle \langle \scF|_N\rangle |_K\rangle,
\]
that is, the differential structure on $K$ induced by the inclusion
$K\hookrightarrow M$ agrees with the differential structure on $K$
successively
induced by the pair of the inclusions $K\hookrightarrow
N\hookrightarrow M$.
\end{Corollary}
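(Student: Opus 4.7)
The plan is to prove the equality by establishing two inclusions of differential structures on $K$. The workhorse in both directions is Lemma \ref{lem:diff_subspaces_embed}, which encodes the universal property of the subspace differential structure, combined with the observation (stated in the Remark preceding that lemma) that the subspace structure is the smallest differential structure making the inclusion into the ambient space smooth.

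For the inclusion $\langle \scF|_K\rangle \subseteq \langle \langle \scF|_N\rangle|_K\rangle$, I would argue that the inclusion $(K,\langle\langle\scF|_N\rangle|_K\rangle)\hookrightarrow(N,\langle\scF|_N\rangle)$ is smooth by definition, and so is $(N,\langle\scF|_N\rangle)\hookrightarrow(M,\scF)$. Composing produces a smooth map from $(K,\langle\langle\scF|_N\rangle|_K\rangle)$ into $M$ whose underlying set map is the inclusion $K\hookrightarrow M$. Minimality of $\langle\scF|_K\rangle$ among differential structures on $K$ that make this inclusion smooth yields the desired containment.

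For the reverse inclusion $\langle \langle \scF|_N\rangle|_K\rangle \subseteq \langle \scF|_K\rangle$, I would start from the smooth inclusion $(K,\langle\scF|_K\rangle)\hookrightarrow M$, which factors through $N$. A first application of Lemma \ref{lem:diff_subspaces_embed} produces a smooth map $(K,\langle\scF|_K\rangle)\to(N,\langle\scF|_N\rangle)$. Since its image lies in $K\subseteq N$, a second application of Lemma \ref{lem:diff_subspaces_embed} yields a smooth map $(K,\langle\scF|_K\rangle)\to(K,\langle\langle\scF|_N\rangle|_K\rangle)$. This map is the identity on underlying sets, so pulling back any $h\in\langle\langle\scF|_N\rangle|_K\rangle$ shows $h\in\langle\scF|_K\rangle$.

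There is no real obstacle here; the argument is essentially a formal diagram chase establishing that the subspace differential structure behaves functorially along iterated inclusions. The two nontrivial ingredients, the universal property of $\scF_N$ and its characterization as the smallest structure making $N\hookrightarrow M$ smooth, are both supplied by Lemma \ref{lem:diff_subspaces_embed} and its preceding remark, so no engagement with the explicit description of generators is needed.
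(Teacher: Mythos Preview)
Your proof is correct and follows essentially the same route as the paper. The reverse inclusion $\langle \langle \scF|_N\rangle|_K\rangle \subseteq \langle \scF|_K\rangle$ is argued identically, via two applications of Lemma~\ref{lem:diff_subspaces_embed}. For the forward inclusion the paper opts for the direct generator check $f|_K=(f|_N)|_K\in\langle\scF|_N\rangle|_K$, whereas you instead compose the two smooth inclusions and invoke minimality of $\langle\scF|_K\rangle$; these are equivalent one-line observations, so the overall approach is the same.
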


\begin{proof}
Since for any $f\in \scF$,
$f|_K = (f|_N)|_K \in \langle \scF|_N\rangle |_K$,
we have $\scF|_K \subseteq \langle \langle \scF|_N\rangle
|_K\rangle$. Therefore, $\langle \scF|_K \rangle \subseteq \langle
\langle \scF|_N\rangle |_K\rangle$.

On the other hand, the inclusion $K\hookrightarrow M$ factors through
the inclusion $K\hookrightarrow N$. Hence by
Lemma~\ref{lem:diff_subspaces_embed}, the map $j\colon (K, \langle \scF|_K
\rangle) \hookrightarrow (N, \langle \scF|_N\rangle )$ is smooth. But
the image of $j$ lands in $K$. Hence the identity map $\id\colon (K,
\langle \scF|_K\rangle) \to (K, \langle \langle \scF|_N\rangle|_K
\rangle)$ is smooth. Consequently, $ \langle \langle \scF|_N\rangle|_K
\rangle = \id^* \langle \langle \scF|_N\rangle|_K
\rangle \subseteq \langle \scF|_K\rangle $.
\end{proof}

In the case where the differential space $(M, \scF)$ is a manifold and
$N$ is a subset of $M$ the subspace differential structure $\scF_N$
has a simple description.

\begin{Lemma}\label{rmrk:2.13} \label{lem:2.27}
Let $M$ be a manifold and $N$ a subset of $M$,
Then $f\colon N\to \R$ is in $\cin(M)_N:= \langle \cin(M)|_N\rangle$ $($the subspace differential
structure on~$N)$ if and only if there is
an open neighbourhood $U$ of $N$ in $M$ and a smooth function $g\colon U\to \R$
such that $f = g|_N$.
Moreover, if $N$ is closed in $M$, we may take $U= M$.
\end{Lemma}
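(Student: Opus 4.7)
The plan is to prove the two implications separately and then handle the ``moreover'' clause. For the easy direction $(\Leftarrow)$, suppose $f = g|_N$ with $g \in \cin(U)$ for some open neighborhood $U$ of $N$ in $M$. I will verify the sheaf-like condition~(3) of Definition~\ref{def:sikorski} for $f$ with respect to the differential structure $\cin(M)_N$ on $N$. Given $p \in N$, the standard existence of smooth bump functions on the manifold $M$ yields $\rho_p \in \cin(M)$ with $\supp \rho_p \subseteq U$ and $\rho_p \equiv 1$ on some open neighborhood $V_p$ of $p$ in $M$. Then $\rho_p \cdot g$, extended by zero outside $\supp \rho_p$, is globally smooth on $M$, and its restriction to $N$ lies in $\cin(M)|_N \subseteq \cin(M)_N$. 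On the open neighborhood $V_p \cap N$ of $p$ in $N$, this restriction coincides with $f|_{V_p \cap N}$. Applying condition~(3) then gives $f \in \cin(M)_N$.

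For $(\Rightarrow)$, I would invoke the explicit description of $\cin(M)_N$ given in the Remark after Lemma~\ref{lem:2.24}: any $f \in \cin(M)_N$ admits a collection $\{U_i\}_{i \in I}$ of open sets in $M$ with $\bigcup_i U_i \supseteq N$, together with functions $g_i \in \cin(M)$ satisfying $f|_{N \cap U_i} = g_i|_{N \cap U_i}$ for every $i$. Set $U := \bigcup_{i \in I} U_i$, an open neighborhood of $N$ in $M$. Since $M$ is second countable and Hausdorff, so is the open subset $U$, and there exists a smooth partition of unity $\{\phi_i\}_{i \in I}$ on $U$ subordinate to $\{U_i\}$. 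Define
\[
g := \sum_{i \in I} \phi_i \cdot g_i \in \cin(U);
\]
local finiteness of the supports $\{\supp \phi_i\}$ ensures the sum is smooth. For any $p \in N$, whenever $\phi_i(p) \neq 0$ one has $p \in \supp \phi_i \subseteq U_i$, so $g_i(p) = f(p)$, and hence $g(p) = \bigl(\sum_i \phi_i(p)\bigr) f(p) = f(p)$. Thus $g$ is the required smooth extension of $f$ to $U$.

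For the moreover clause, assume $N$ is closed in $M$, and let $g \in \cin(U)$ be the extension produced above. Since $\{U, M \setminus N\}$ is then an open cover of $M$, paracompactness yields a smooth partition of unity $\{\chi_U, \chi_{M \setminus N}\}$ subordinate to it; in particular $\supp \chi_U \subseteq U$, and $\chi_U \equiv 1$ on $N$ because $\chi_{M \setminus N}$ vanishes there. Then $\tilde g := \chi_U g$, extended by zero outside $\supp \chi_U$, is well defined and smooth on all of $M$, with $\tilde g|_N = g|_N = f$. The main technical step is the partition-of-unity gluing in the $(\Rightarrow)$ direction; it relies crucially on our standing assumption that manifolds are second countable and Hausdorff (hence paracompact), without which a family of locally defined global smooth functions agreeing on $N$ could not in general be assembled into a single smooth extension on an open neighborhood.
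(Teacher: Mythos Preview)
Your proof is correct and follows essentially the same route as the paper: the explicit description of $\cin(M)_N$ together with a partition-of-unity gluing for $(\Rightarrow)$, and bump functions for $(\Leftarrow)$. The only cosmetic differences are that the paper packages the $(\Leftarrow)$ direction by citing Remark~\ref{cin U} and Corollary~\ref{cor:2.27} rather than writing out the bump-function argument, and for the ``moreover'' it adjoins $M\setminus N$ to the original cover $\{U_i\}$ and takes a single partition of unity on $M$, whereas you first build $g\in\cin(U)$ and then cut it off---both variants are standard and equivalent.
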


\begin{Remark} \label{cin U}
Let $M$ be a manifold and $U \subset M$ an open subset.
Then $\cin(U) = \langle \cin(M)|_U \rangle$.
This follows from the existence of bump functions.
\end{Remark}

\begin{proof}[Proof of Lemma \ref{lem:2.27}]
Let $U \subset M$ be an open set with $N \subset U$,
and let $g \in \cin(U)$.

By Remark~\ref{cin U} and Corollary~\ref{cor:2.27},
for any open set $U \subset M$
with $N\subset U$ and any $g\in \cin(U)$,
the restriction $g|_N$ is in $\cin(M)_N$.

Conversely, suppose $f\in \cin(M)_N$. Then there is an collection of
open sets $\{U_i\}_{i\in I}$ with ${N\subset \bigcup_i U_i}$ and
$\{g_i\}_{i\in I}\subset \cin(M)$ so that $f|_{U_i \cap N} =
g_i|_{U_i\cap N}$ for all $i$. Let $U = \bigcup_i U_i$. There is a~partition of unity $\{\rho_i\}_{i\in I}$ on $U$ subordinate to the cover $\{U\}_{i\in I}$.
Consider ${g := \sum \rho_i g_i \!\in \!\cin(U)}$. Then $g|_N= f$.

If $N$ is closed, then $\{U_i\}_ {i\in I} \cup \{M\setminus N\}$
is an open cover of $M$.
Choose a partition of unity $\{\rho_i\}_{i\in I} \cup \{\rho_0\}$
subordinate to { this} cover of $M$ (with $\supp \rho_0\subset
M\setminus N$),
and again set $g := \sum_{i\in I} \rho_i g_i$.
Then $g$ is a smooth function on all of $M$, and $g|_N = f$.
\end{proof}

\begin{Remark}
Lemma~\ref{lem:2.27} holds in greater generality. The proof does not
really used the fact that $M$ is a manifold, it only needs the
existence of the partition of unity. These do exist for second
countable Hausdorff locally compact differential spaces; see \cite{Sn}.
\end{Remark}

\begin{Definition} \label{def:subcart} A differential space $(M, \scF)$
is {\it subcartesian} iff it is locally isomorphic to a subset
of a Euclidean (a.k.a.\ Cartesian) space:
for every point $p\in M$, there is an open neighborhood $U$ of $p$ in
$M$ and an embedding $\varphi\colon (U,\scF_U) \to (\R^n, \cin(\R^n))$ ($n$
depends on the point $p$).
\end{Definition}

\subsection*{Products}
The domain of a flow of a vector field on a manifold $M$ is a subset
of the product $M\times \R$. Therefore, in order to define and
understand flows of derivations on differential spaces we need to
understand finite products in the category of differential spaces.

Given two differential spaces $(M_1,
\scF_1)$ and $(M_2, \scF_2)$ there are many
 differential structures on their product $M_1\times M_2$ so
that the projections $\pi_i\colon M_1\times M_2\to M_i$, $i=1,2$ are smooth.
The smallest such structure is the one
generated by the set $\pi_1^*\scF_1 \cup \pi_2^*\scF_2$.
We denote this structure by $\scF_{\prod}$. That is,
\[
\scF_{prod}:= \langle \pi_1^*\scF_1 \cup \pi_2^*\scF_2\rangle.
\]
Since the
initial topology for $\pi_1^*\scF_1 \cup \pi_2^*\scF_2$ is the product
topology, the initial topology for $\scF_{\prod}$ is also the product topology
(cf.\ Remark~\ref{rmrk:2.22}).

We next check that $(M_1\times M_2,
\scF_{\prod})$ together with
the projections $\pi_1$, $\pi_2$ has the universal properties of the
product in the category of differential spaces.
Note that the projections $\pi_1$, $\pi_2$ are smooth.

\begin{Lemma} \label{lem:diff_prod}
Let $(M_1, \scF_1)$, $(M_2, \scF_2)$ be two differential spaces,
$(Y,\scG)$ another differential space, $\varphi_i\colon Y\to M_i$, $i=1,2$
two smooth maps. Then there exists a unique smooth map $\varphi\colon (Y,\scG)\to
(M_1\times M_2, \scF_{\prod})$ with $\pi_i\circ \varphi = \varphi_i$, $i=1,2$.
\end{Lemma}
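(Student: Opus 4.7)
\medskip

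\noindent\textbf{Proof plan.}
The plan is to follow the standard universal-property argument: define $\varphi$ set-theoretically by the only formula compatible with $\pi_i\circ\varphi=\varphi_i$, and then check that it is smooth by checking smoothness only on a set of generators of $\scF_{\prod}$.

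\emph{Uniqueness.} If $\varphi\colon Y\to M_1\times M_2$ satisfies $\pi_i\circ\varphi=\varphi_i$ for $i=1,2$, then for every $y\in Y$ its two coordinates are forced to be $\varphi_1(y)$ and $\varphi_2(y)$. So as a map of sets $\varphi$ must be $y\mapsto(\varphi_1(y),\varphi_2(y))$.

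\emph{Existence.} I would define $\varphi\colon Y\to M_1\times M_2$ by $\varphi(y):=(\varphi_1(y),\varphi_2(y))$. This set-theoretic map obviously satisfies $\pi_i\circ\varphi=\varphi_i$, so the only thing to verify is that $\varphi$ is a smooth map to $(M_1\times M_2,\scF_{\prod})$, i.e., that $f\circ\varphi\in\scG$ for every $f\in\scF_{\prod}$. The trick is that it is enough to check this on the generators $\pi_1^*\scF_1\cup\pi_2^*\scF_2$ of $\scF_{\prod}$. Indeed, consider
\[
\scH:=\{f\colon M_1\times M_2\to\R\mid f\circ\varphi\in\scG\}.
\]
Using condition (2) in Definition~\ref{def:sikorski} applied in $(Y,\scG)$, $\scH$ is closed under composition with smooth functions of several variables; using condition (3) applied in $(M_1\times M_2,\scF_{\prod})$, $\scH$ is closed under the locality condition; and the initial topology on $M_1\times M_2$ making $\scH$ continuous clearly makes $\varphi$ continuous into the product topology. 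So $\scH$ is a differential structure on $M_1\times M_2$. It therefore suffices to show $\pi_1^*\scF_1\cup\pi_2^*\scF_2\subseteq\scH$, after which $\scF_{\prod}=\langle\pi_1^*\scF_1\cup\pi_2^*\scF_2\rangle\subseteq\scH$ by Lemma~\ref{rmrk:2.22}.

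\emph{Checking the generators.} For $h\in\scF_1$, smoothness of $\varphi_1$ gives
\[
(\pi_1^*h)\circ\varphi = h\circ\pi_1\circ\varphi = h\circ\varphi_1\in\scG,
\]
so $\pi_1^*h\in\scH$; symmetrically $\pi_2^*\scF_2\subseteq\scH$. This finishes existence.

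\emph{Where the subtlety lies.} There is no real obstacle; the only mildly delicate point is the ``check on generators'' step, and specifically the verification that the pullback set $\scH$ satisfies the locality condition (3) of Definition~\ref{def:sikorski}. This uses that if $f$ agrees locally on $M_1\times M_2$ with elements of $\scH$, then $f\circ\varphi$ agrees locally on $Y$ (via preimages under the continuous map $\varphi$, whose continuity follows from the very definition of $\scF_{\prod}$ having the product topology) with elements of $\scG$, and hence lies in $\scG$ by the locality axiom for $(Y,\scG)$.
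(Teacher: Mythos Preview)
Your argument is correct and is exactly the paper's approach, just unpacked: the paper writes in one line that $\varphi^*(\pi_1^*\scF_1\cup\pi_2^*\scF_2)\subseteq\scG$ and then invokes $\scF_{\prod}=\langle\pi_1^*\scF_1\cup\pi_2^*\scF_2\rangle$ to conclude $\varphi^*\scF_{\prod}\subseteq\scG$, which is precisely what your construction of $\scH=\{f:f\circ\varphi\in\scG\}$ as a differential structure containing the generators makes explicit. One small cleanup: when you verify condition~(3) for $\scH$, the relevant topology on $M_1\times M_2$ is the initial topology of $\scH$ itself rather than the product topology, but $\varphi$ is continuous into that topology for the same reason (every $f\in\scH$ has $f\circ\varphi\in\scG$, hence continuous on $Y$), so your locality check goes through unchanged.
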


\begin{proof}
Clearly there is a unique map of {\em sets} $\varphi\colon Y\to
M_1\times M_2$ with $\pi_i\circ \varphi = \varphi_i$, $i=1,2$.
Moreover since $\varphi_i$ ($i=1,2$) are smooth
\[
\scG \supseteq \varphi_i^*\scF_i = \varphi^*(\pi^*\scF_i).
\]
Therefore, $\varphi^* (\pi_1^*\scF_1 \cup \pi_2^*\scF_2) \subseteq
\scG$. Since $\scF_{\prod} = \langle \pi_1^*\scF_1 \cup
\pi_2^*\scF_2\rangle$, $\varphi^* \scF_{\prod} \subseteq \scG$ as well. Thus~$\varphi$ is smooth.
\end{proof}

\begin{Remark}
Let $M_1$, $M_2$ be two manifolds with the usual differential
structures (i.e., $\cin(M_1)$ and $\cin(M_2)$). Then $\cin(M_1\times
M_2)$ is the product differential structure on $M_1 \times M_2$.
\end{Remark}

We end the section by proving that taking products commutes with taking subspaces.
\begin{Lemma} \label{lem:2.33}
Let $(M_1, \scF_1)$, $(M_2, \scF_2)$ be two differential spaces,
$N_1\subseteq M_1$, $N_2 \subseteq M_2$ two subspaces, $\scG_1$,
$\scG_2$ the subspace differential structures on $N_1$, $N_2$
respectively. Then the product differential structure $\scG_{\prod}$
on $N_1\times N_2$ is the subspace differential structure
$(\scF_{\prod})_{N_1\times N_2}$.
\end{Lemma}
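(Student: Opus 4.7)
The plan is to prove the equality $\scG_{\prod} = (\scF_{\prod})_{N_1\times N_2}$ by showing that the identity map on the underlying set $N_1\times N_2$ is smooth in both directions between these two differential structures. Since $\id\colon (X,\scH_1)\to (X,\scH_2)$ is smooth precisely when $\scH_2\subseteq \scH_1$, establishing smoothness in both directions gives the claimed equality. The argument is a pure diagram chase using only the universal properties of Lemma~\ref{lem:diff_prod} (for products) and Lemma~\ref{lem:diff_subspaces_embed} (for subspaces), so no manipulation of generators is required.

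For $\id\colon (N_1\times N_2,\scG_{\prod})\to (N_1\times N_2,(\scF_{\prod})_{N_1\times N_2})$, I would first observe that the set-theoretic inclusion $\iota\colon N_1\times N_2\hookrightarrow M_1\times M_2$ satisfies $\pi_j\circ \iota = \iota_j\circ p_j$ for $j=1,2$, where $p_j\colon N_1\times N_2\to N_j$ is the projection and $\iota_j\colon N_j\hookrightarrow M_j$ is the subspace inclusion. Both $p_j\colon (N_1\times N_2,\scG_{\prod})\to (N_j,\scG_j)$ and $\iota_j\colon (N_j,\scG_j)\to (M_j,\scF_j)$ are smooth by the very definitions of the product and subspace structures, so each $\pi_j\circ \iota$ is smooth, and hence so is $\iota\colon (N_1\times N_2,\scG_{\prod})\to (M_1\times M_2,\scF_{\prod})$ by Lemma~\ref{lem:diff_prod}. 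Lemma~\ref{lem:diff_subspaces_embed} then upgrades $\iota$ to a smooth map into $(N_1\times N_2,(\scF_{\prod})_{N_1\times N_2})$, which is exactly the identity map in question.

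For the reverse direction, I would verify that each projection $p_j\colon (N_1\times N_2,(\scF_{\prod})_{N_1\times N_2})\to (N_j,\scG_j)$ is smooth; the universal property of $\scG_{\prod}$ (Lemma~\ref{lem:diff_prod}) will then deliver the reverse identity map for free. Now the inclusion $(N_1\times N_2,(\scF_{\prod})_{N_1\times N_2})\hookrightarrow (M_1\times M_2,\scF_{\prod})$ is smooth by definition of the subspace structure, and each $\pi_j\colon (M_1\times M_2,\scF_{\prod})\to (M_j,\scF_j)$ is smooth by construction of the product, so their composite is a smooth map from $(N_1\times N_2,(\scF_{\prod})_{N_1\times N_2})$ to $M_j$ that factors through $N_j$; Lemma~\ref{lem:diff_subspaces_embed} promotes it to a smooth $p_j$ into $(N_j,\scG_j)$.

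I do not anticipate any serious obstacle: once the universal properties are assembled correctly, every arrow in sight is handed to us for free. The only thing to be careful about is tracking which copy of $N_1\times N_2$ (with which differential structure) each arrow has as domain or codomain, and not conflating the various inclusions $\iota,\iota_j$ and projections $\pi_j,p_j$ that shuffle between $M_j$, $N_j$, and their products.
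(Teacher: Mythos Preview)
Your proof is correct and essentially the same as the paper's: both rely only on Lemma~\ref{lem:diff_prod} and Lemma~\ref{lem:diff_subspaces_embed}, and your Direction~2 is verbatim the paper's argument that the projections $\pr_j$ from $(N_1\times N_2,(\scF_{\prod})_{N_1\times N_2})$ to $(N_j,\scG_j)$ are smooth. The only cosmetic difference is that the paper then verifies the full universal property of the product for $(N_1\times N_2,(\scF_{\prod})_{N_1\times N_2})$ against an arbitrary test object $(Y,\scA)$, whereas you specialize immediately to $Y=(N_1\times N_2,\scG_{\prod})$ and phrase the conclusion as ``the identity is smooth both ways''; your Direction~1 is exactly the paper's universal-property check with this particular $Y$.
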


\begin{proof}
 It is enough to check that $(N_1\times N_2, (\scF_{\prod})_{N_1\times
N_2})$ together with the projections
\[
\pr_i\colon\ (N_1\times N_2 , (\scF_{\prod})_{N_1\times
N_2})\to ( N_i, \scG_i),\qquad i=1,2,
\]
has the universal properties of the product (in the category
of differential spaces).

We first argue that the projections $\pr_1$, $\pr_2$ are smooth. The
projections
\[
\pi_i\colon\ (M_1\times M_2, \scF_{\prod}) \to (M_i, \scF_i)
\]
are smooth by definition of the product differential structure
$\scF_{\prod}$. Hence their restrictions $\pi_i|_{N_1\times N_2} \colon N_1\times N_2\to
M_i$ are smooth as well. Since $\pi_i(N_1\times N_2) \subseteq N_i$,
the maps
\[\pr_i = \pi_i|_{N_1\times N_2} \colon \ (N_1\times N_2, (\scF_{\prod})_{N_1\times
N_2}) \to (N_i, \scG_i)\]
 are also smooth (by Lemma~\ref{lem:diff_subspaces_embed}).

Now let $(Y,\scA)$ be a differential space and $\varphi_i\colon Y\to N_i$,
$i=1,2$ be a pair of smooth maps. Since the inclusions
$\jmath_i\colon N_i \to M_i$ are smooth, the composites
$\jmath_i\circ \varphi_i\colon Y\to M_i$, ${i=1,2}$, are smooth. By the
universal property of the product,
there is a
unique smooth map
$\varphi\colon (Y,\scA) \to (M_1\times M_2, \scF_{\prod})$ so that
$\pi_i \circ \varphi = \jmath_i \circ \varphi_i$. Consequently,
$(\pi_i \circ \varphi )(Y) \subseteq N_i$. Hence
$\varphi(Y) \subseteq N_1\times N_2$. Since $N_1\times N_2 $ is a
subspace of $(M_1\times M_2, \scF_{\prod})$ the map
$\varphi\colon (Y, \scA)\to (N_1\times N_2, (\scF_{\prod})_{N_1\times
 N_2})$ is smooth (see Lemma~\ref{lem:diff_subspaces_embed}).
Therefore, $(N_1\times N_2, (\scF_{\prod})_{N_1\times N_2})$ together
with the projections $\pr_1$, $\pr_2$ is the
product of $(N_1, \scG_1)$ and $(N_2, \scG_2)$. We conclude that
$ (\scF_{\prod})_{N_1\times N_2} = \scG_{\prod}$.
\end{proof}

\section{Derivations and their flows} \label{sec:4}

A vector field $v$ on a manifold $M$ can be defined as a derivation $v\colon \cin(M)\to
\cin(M)$ of the $\R$-algebra of smooth functions on $M$:
$v$ is
$\R$-linear and for any two functions $f,g\in \cin(M)$ the product
rule holds:
\[
v(fg) = v(f) g + f v(g).
\]
One then proves that the chain rule also holds:
for any $n\geq 1$, any $g\in \cin(\R^n)$ and any
$f_1,\dots, f_n \in \cin(M)$
\begin{equation} \label{eq:3.2}
v (g\circ (f_1,\dots, f_n) )= \sum_{i=1}^n \left((\partial_i g) \circ
(f_1,\dots, f_n)\right) \cdot v(f_i).
\end{equation}
Note that \eqref{eq:3.2}, appropriately interpreted, makes sense for any $\cin$-ring, since any
$\cin$-ring is an $\R$-algebra (cf.\ Remark~\ref{rmrk:r-alg}) and
therefore carries the addition and multiplication operations.
Namely, we have the following definition.

\begin{Definition}
Let $\scA$ be a $\cin$-ring. A {\it $\cin$-ring derivation} of $\scA$
is a function $v\colon \scA\to \scA$ so that for any $n\geq 1$, any $g\in \cin(\R^n)$ and any
$f_1,\dots, f_n \in \scA$
\[ 
v (g_\scA (f_1,\dots, f_n) )= \sum_{i=1}^n \left((\partial_i g)_\scA
(f_1,\dots, f_n)\right) \cdot v(f_i).
\]
\end{Definition}
It turns out, thanks to a theorem of Yamashita
\cite[Theorem~3.1]{Yamashita}, that any $\R$-algebra derivation of a
jet-determined $\cin$-ring is automatically a $\cin$-ring
derivation. We will not explain what ``jet-determined'' means, since
this will take us too far afield. Suffices to say that all
$\cin$-rings arising as differential structures are jet-determined
$\cin$-rings. In fact, there is a class of $\cin$-rings that includes
differential structures (namely for point-determined $\cin$-rings, see
Definition~\ref{def:pd}) for which Yamashita's theorem has a short
proof. We present the proof in Appendix~\ref{app:derivations}. {\em From
 now on when talking about derivations of differential structures we
 will not distinguish between $\R$-algebra derivations and
 $\cin$-ring derivations since they are one and the same.}

\begin{Remark}
Given a differential space $(M,\scF)$ we view a derivation $v\colon \scF\to
\scF$ as the correct analogue of a vector field on $(M,\scF)$. Thus
``vector fields'' in the title of the paper are derivations of
differential structures. See also Remark~\ref{rmrk:tempt}.
\end{Remark}

We now define integral curves of a derivation.

\begin{Definition}\label{def:interval}
An {\it interval} is a connected
subset of the real line $\R$.
\end{Definition}
\begin{Remark}\quad
\begin{itemize}\itemsep=0pt
\item By Definition~\ref{def:interval} a single point
is an interval.

\item The induced differential structure
on an interval $I\subset \R$
is the set of smooth functions $\cin(I)$ on $I$ (note that $\cin(I)$ makes sense
in all cases: $I$ is open, closed, half-closed or a~single point).
\item
Unless the interval $I$ is a singleton, there is
a canonical derivation $\frac{\rm d}{{\rm d}x}\colon\cin(I)\to \cin(I)$ since we can
differentiate smooth functions on an interval.
\end{itemize}
\end{Remark}

\begin{Definition} \label{def:integral curve}
 Let $v\colon \scF\to \scF$ be a derivation on a differential space
 $(M,\scF)$. An {\it integral curve~$\gamma$ of}~$v$ is
 either a map $\gamma\colon \{*\} \to M$ from a 1-point interval or a smooth map
 $\gamma\colon (J, \cin(J))\to (M, \scF)$ from an interval
 $J\subset \R$ so that
 \[
\frac{\rm d}{{\rm d}x} (f\circ \gamma) = v(f) \circ \gamma
\]
for all functions $f\in \scF$.

The curve $\gamma$ {\it starts at a point $p\in M$}
if $0 \in J$ and $\gamma(0) = p$.
\end{Definition}

\begin{Remark}
 We tacitly assume that all integral curves contain zero in
 their domain of definition. Thus any integral curve $\gamma$ of a
 derivation starts at $\gamma(0)$.
\end{Remark}

\begin{Definition}\label{def:max integral curve} \label{def:max}
An integral curve $\gamma\colon I \to M$ of a derivation $v$ on a differential space $(M,\scF)$
is {\it maximal} if for any other integral curve $\tau\colon K\to M$ of $v$
with $\tau(0) = \gamma(0)$ we have $K\subseteq I$ and~${\gamma|_K = \tau}$.
\end{Definition}

\begin{Remark}
Note that maximal integral curves are necessarily unique.
\end{Remark}

{\samepage The following examples are meant to illustrate two points:
\begin{itemize}\itemsep=0pt
\item[(1)] curves that only exist for time 0 should be allowed as
 integral curves of derivations and
\item[(2)] we should not require that the domain of an integral curve is
 an open interval.
\end{itemize}}
\begin{Example}
Consider the derivation $v = \frac{\rm d}{{\rm d}x}$ on the interval
 $[0,1]$. Then $\gamma\colon [-1/2, 1/2]\to [0,1]$, $\gamma(t) = t+1/2$ is
 an integral curve
 of $v$. The curve $\gamma$ is a maximal integral curve of $v$ and
 its domain is a closed interval.
\end{Example}
\begin{Example} \label{ex:4.18}
 Let $M$ be the standard closed disk in $\R^2\colon M = \big\{(x,y)\mid x^2 +y^2 \leq 1\big\}$. Then~$M$ is a manifold with
 boundary and a differential subspace of $\R^2$ (the two spaces of
 smooth functions are the same!). Consider the vector
 field $v = \frac{\partial}{\partial x}$ on $M$. %
 The curve
 $\gamma\colon \{0\} \to M$, $\gamma(0) = (0,1)$ is an integral curve of $v$;
 it only exists for zero time. The derivation $v$ does have a~flow in the
 sense of Definition \ref{def:flow} below. The flow is
\begin{gather*}
 \Phi\colon\ U \equiv \big\{((x,y), t) \in \R^2\times \R \mid x^2 + y^2 \leq 1, (x+t)^2
 +y^2 \leq 1\big\} \to M, \\ \Phi((x,y),t) = (x+t, y).
\end{gather*}
Note that while $M$ is a manifold with boundary, the flow domain $U$
is not a manifold with boundary nor a manifold with corners. The domain $U$ is a
differential space, and the flow $\Phi$ is smooth since it is the
restriction to $U$ of the smooth map $\Psi\colon \R^3 \to \R^2$, $\Psi((x,y),t) =
(x+t, y)$.
\end{Example}

\begin{Remark}
\label{rk:def-vf}
\'{S}niatycki defines a vector field on a differential space $(M,\scF)$
to be a derivation of $\scF$ that integrates to local diffeomorphisms of $M$;
see \cite[Definition~3.2.2]{Sn}.
In particular, the domains of its maximal integral curves
include neighbourhoods of $0$.
By this definition, a vector field on a manifold with boundary
must be tangent to the boundary.
\end{Remark}

\begin{Remark} \label{rmrk:tempt}
We were tempted to call all derivations on differential spaces ``vector fields''.
In the end we decided against it to avoid the clash with \'{S}niatycki's terminology.
\end{Remark}

\begin{Definition}\label{def:flow}
Let $v\colon \scF\to \scF$ be a derivation on a differential space
$(M,\scF)$. A {\it flow} of $v$ is a smooth map $\Phi\colon W \to M$ from
a subspace $W$ of $M\times \R$ with $M\times \{0\} \subset W$ such that
for all $x\in M$
\begin{enumerate}\itemsep=0pt
\item[(1)] $\Phi(x, 0) =x$;
\item[(2)]
the set $I_x := \{ t \in \R \mid (x,t) \in W \}$
is connected;
\item[(3)]
the map $\Phi(x,\cdot)\colon I_x \to M$
is a maximal integral curve for $v$ (see Definition~\ref{def:max}).
\end{enumerate}
\end{Definition}

We are now in position to state the main result of the paper.
\begin{Theorem}
\label{thm:main}
Let $(M,\!\scF)$ be a differential space which is diffeomorphic to a
subset of some~$\R^n$,
and $v\colon \scF \to \scF$ a derivation. Then $v$ has a unique flow
$($see Definition {\rm \ref{def:flow})}.
\end{Theorem}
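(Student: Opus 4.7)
My plan is to reduce the problem to the integration of a smooth vector field on an open subset of $\R^n$, where the classical theory applies, and then pull this back to $M$. Without loss of generality, assume $M\subseteq \R^n$ with $\scF = \langle \cin(\R^n)|_M\rangle$. Let $x_1,\dots,x_n$ be the coordinate functions on $\R^n$ and set $v_i := v(x_i|_M)\in \scF$. Using the explicit description of the subspace differential structure (the remark after Lemma~\ref{lem:2.24}), each $v_i$ is locally the restriction of a function in $\cin(\R^n)$. A partition of unity argument then yields an open neighborhood $U$ of $M$ in $\R^n$ and smooth extensions $\tilde v_i \in \cin(U)$ with $\tilde v_i|_M = v_i$. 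Define the smooth vector field $V := \sum_{i=1}^n \tilde v_i \,\partial_i$ on $U$, and let $\Psi\colon \scU \to U$ be its usual flow, where $\scU$ is an open subset of $U\times \R$ containing $U\times\{0\}$; this $\Psi$ is smooth by standard ODE theory.

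By Corollary~\ref{cor:B.5}, for each $p\in M$ the derivation $v$ has a unique maximal integral curve $\gamma_p\colon I_p\to M$ starting at $p$. I next claim that $i\circ \gamma_p\colon I_p\to U$ (with $i\colon M\hookrightarrow U$ the inclusion) is an integral curve of $V$ on $U$. For any $f\in \cin(U)$, the chain rule for $\cin$-ring derivations gives $v(f|_M) = \sum_i (\partial_i f)|_M \cdot v(x_i|_M) = V(f)|_M$, so
\[
\tfrac{\rm d}{{\rm d}t}(f\circ i\circ \gamma_p) = \tfrac{\rm d}{{\rm d}t}((f|_M)\circ \gamma_p) = v(f|_M)\circ \gamma_p = V(f)\circ (i\circ \gamma_p),
\]
which is the defining condition for an integral curve of $V$. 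By uniqueness for smooth ODEs on $U$, this forces $i\circ \gamma_p(t) = \Psi(p,t)$ for all $t\in I_p$; in particular $I_p\times\{p\}\subset \scU$ and $\Psi(p,t)\in M$ whenever $(p,t)\in M\times I_p$. Now set $\cW := \{(p,t)\in M\times \R : t\in I_p\}$ and define $\Phi\colon \cW\to M$ by $\Phi(p,t) := \gamma_p(t)$; properties (1)--(3) of Definition~\ref{def:flow} are built in, and uniqueness of $\Phi$ follows immediately from the uniqueness of maximal integral curves.

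It remains to prove that $\Phi\colon \cW\to M$ is smooth. Lemma~\ref{lem:2.33} identifies the subspace differential structure on $\cW\subseteq M\times\R$ with its subspace structure inside $(U\times\R,\cin(U\times\R))$. By construction $\Phi = \Psi|_{\cW}$, and $\Psi$ is a smooth map $\scU\to U\subseteq \R^n$; since $\cW\subseteq \scU$ with the subspace structure and $\Phi(\cW)\subseteq M$, Lemma~\ref{lem:diff_subspaces_embed} applied twice yields that $\Phi\colon \cW\to M$ is smooth. The main technical obstacle in this plan is the construction and use of the ambient extension $V$: one must verify that $v_i$ indeed admits a global smooth extension to some neighborhood of $M$ (rather than only local ones), that the domains match up so $\Psi$ is defined at least on $\cW$, and that the chain-rule identity $V(f)|_M = v(f|_M)$ applies \emph{before} we know $v$ is a $\cin$-ring derivation in the sense of Yamashita's theorem—which is exactly why the result in Appendix~\ref{app:derivations} identifying $\R$-algebra derivations with $\cin$-ring derivations of $\scF$ is needed.
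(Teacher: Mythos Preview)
Your proposal is correct and follows essentially the same route as the paper: the paper packages the construction of the ambient extension $V$ and the identification of integral curves (your claim that $i\circ\gamma_p$ is an integral curve of $V$) into Lemma~\ref{lem:B.4} and Corollary~\ref{cor:3.20}, and then proves smoothness of $\Phi=\Psi|_W$ via Lemma~\ref{lem:2.33} and Corollary~\ref{cor:2.27} exactly as you do. The one point you gloss over is that the chain-rule identity $v(f|_M)=\sum_i(\partial_i f)|_M\cdot v(x_i|_M)$ for $f\in\cin(U)$ (as opposed to $f\in\cin(\R^n)$) is not immediate from the $\cin$-ring derivation property alone---the operations on $\scF$ are indexed by \emph{globally} smooth functions---and the paper supplies a separate localization argument (equation~\eqref{eq:B.1}) to cover this.
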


\begin{Remark} \label{rmrk:3.13}
The conditions of Theorem~\ref{thm:main} are not as restrictive as
they may seem at the first glance since there a version of Whitney
embedding theorem for subcartesian spaces \cite{BM,
 Kowalczyk, Motreanu}.\footnote{To precisely state the embedding
 theorem of Breuer, Marshall, Kowalczyk and Motreanu we need to recall
 the definition of the structural dimension of a subcartesian
 space. It proceeds as follows: given a subcartesian space $M$ its
 {\it structural dimension at a point} $x\in M$ is the smallest integer
 $n_x$ so that a neighborhood of $x$ can be embedded in $\R^{n_x}$.
 The {\it structural dimension} of a subcartesian space $M$ is the supremum
of the set of structural dimensions of points of $M$. The embedding
theorem (see \cite[Theorem 2.2]{BM}) then says:

\begin{Theorem} \label{thm:whitney}
Any second countable subcartesian space $M$ of finite structural
dimension can be embedded in some Euclidean space.
\end{Theorem}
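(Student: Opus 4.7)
The plan is to adapt the classical Whitney embedding theorem to the subcartesian setting, combining the local embeddings supplied by Definition~\ref{def:subcart} with a partition-of-unity gluing whose finiteness is controlled by the structural dimension. Let $n$ denote the structural dimension of $M$. First I would cover $M$ by open sets $\{V_\alpha\}$ each equipped with an embedding $\varphi_\alpha\colon (V_\alpha, \scF_{V_\alpha}) \hookrightarrow (\R^{n}, \cin(\R^{n}))$ (padding $\R^{n_\alpha}$ into $\R^n$ by zero coordinates if necessary). Since $M$ is second countable, Hausdorff, and locally compact (inherited from the local Euclidean embeddings), it is paracompact, so I may pass to a countable locally finite refinement; pullbacks under the $\varphi_\alpha$ of Euclidean bump functions populate $\scF$ richly enough to construct smooth partitions of unity subordinate to any locally finite open cover of $M$.

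The critical step is a \emph{bounded-multiplicity refinement}: refine $\{V_\alpha\}$ to a locally finite open cover $\{U_i\}_{i \in I}$ such that every point of $M$ lies in at most $n+1$ of the $U_i$, and partition the index set as $I = I_0 \sqcup \dots \sqcup I_n$ so that within each color class $I_k$ the sets $\{U_i\}_{i \in I_k}$ are pairwise disjoint. For manifolds this follows from the equality of topological and Lebesgue covering dimensions; the subcartesian analogue needed here is that a subcartesian space of structural dimension $n$ has covering dimension at most $n$, which I would establish by pushing covers forward through the local Euclidean embeddings (subsets of $\R^n$ have covering dimension at most $n$) and invoking the sum theorem for covering dimension. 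This dimension-theoretic input is the step I expect to be the main obstacle.

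Given such a colored refinement and a subordinate smooth partition of unity $\{\rho_i\}$, I would define $\Phi \colon M \to (\R^{n}\times \R)^{n+1}$ by
\[
\Phi_k(x) := \sum_{i \in I_k} \bigl(\rho_i(x)\,\varphi_i(x),\,\rho_i(x)\bigr), \qquad k = 0,\dots,n,
\]
each summand extended by zero outside $U_i$; disjointness within $I_k$ ensures at most one term is nonzero at any given $x$, so each $\Phi_k$ is smooth, as is $\Phi$. Injectivity is coordinate-by-coordinate: if $\Phi(x) = \Phi(y)$ and $\rho_i(x) > 0$ with $i \in I_k$, the second coordinate of $\Phi_k$ forces $y \in U_i$ with $\rho_i(y) = \rho_i(x)$, and then the first coordinate forces $\varphi_i(x) = \varphi_i(y)$, whence $x = y$ by injectivity of $\varphi_i$. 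Finally, to upgrade $\Phi$ to an embedding in the sense of Definition~\ref{def:diffeomorphism}, I would argue locally: near any $p \in M$ pick $i_\ast$ with $\rho_{i_\ast}(p) > 0$ and a neighborhood $W \subset U_{i_\ast}$ of $p$ on which $\rho_{i_\ast}$ is bounded below; on $W$ the $i_\ast$-block of $\Phi$ reconstructs $\varphi_{i_\ast}(x)$, hence $x$, as a smooth function of $\Phi(x)$, so by Lemma~\ref{lem:2.24} the map $\Phi(W) \to W$ is smooth with respect to the subspace differential structure. These local inverses assemble by uniqueness into a global smooth inverse $\Phi(M) \to M$, completing the embedding.
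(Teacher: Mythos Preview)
The paper does not supply its own proof of Theorem~\ref{thm:whitney}; the result is quoted in a footnote with attribution to \cite{BM,Kowalczyk,Motreanu}, and the reader is referred to \cite[Theorem~2.2]{BM}. So there is no in-paper argument to compare against, and your proposal should be read as an attempt to reconstruct what those references do.

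Your outline contains a genuine error that the paper itself flags. You assert that $M$ is ``locally compact (inherited from the local Euclidean embeddings)'' and use this to obtain paracompactness and, implicitly, smooth partitions of unity. But a subcartesian space is only locally diffeomorphic to an \emph{arbitrary} subset of some $\R^n$, and arbitrary subsets of $\R^n$ need not be locally compact (take $\Q\subset\R$, or the set of points in $\R^2$ with at least one rational coordinate). The paper makes exactly this point in the Remark immediately following Theorem~\ref{thm:whitney}: ``A subset $M$ of $\R^n$ need not be locally compact. Note that the conditions of Theorem~\ref{thm:whitney} do not require local compactness.'' Likewise, the paper's own remark on partitions of unity (after Lemma~\ref{lem:2.27}) assumes local compactness as a hypothesis, so you cannot invoke it here without further argument.

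This is not fatal to the overall strategy, but it means two load-bearing steps need independent justification: (i) paracompactness of $M$ (recoverable from second countability plus regularity, once you check that a subcartesian differential space is Hausdorff and regular---neither is entirely automatic from Definition~\ref{def:subcart}), and (ii) the existence of smooth partitions of unity subordinate to a locally finite cover \emph{without} assuming local compactness. Your covering-dimension step and the colored-cover construction of $\Phi$ look essentially correct once those foundations are in place, and indeed this is close in spirit to how the cited references proceed; but as written the argument rests on a premise the theorem is explicitly designed to avoid.
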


\begin{Remark}
If $M$ is a subset of $\R^n$ (with the subset differential
structure), then $M$ is subcartesian and second countable, and its
structural dimension is bounded above by $n$. So the conditions of
Theorem~\ref{thm:whitney} are necessary.

A subset $M$ of $\R^n$ need not be locally compact. Note that
the conditions of Theorem~\ref{thm:whitney} do not require local
compactness.
\end{Remark}

\begin{Remark}
 The disjoint union $\bigsqcup _{n\geq 0} \R^n$ is an example of a
subcartesian space that is not embeddable in $\R^N$ for any $N$:
its structural dimension is infinite.
\end{Remark}}

On the other hand if the differential space is not subcartesian, then a
derivation may have infinitely many integral curves starting at a given
point: see \cite[Example~2, Section 32.12]{KM}. Such a derivations does not have a flow. We are grateful to
Wilmer Smilde for bringing this example to our attention.
\end{Remark}

We start the proof of Theorem~\ref{thm:main} by proving existence and
uniqueness of maximal integral curves. This, in turn, needs a lemma.
\begin{Lemma} \label{lem:B.2}
Let $\scA$ be a $\cin$-ring, $w\colon \scA\to \scA$ a derivation and $a, b\in
\scA$ two elements with $ab=1$ {\rm(}i.e., $a$ is invertible and $b$ is the
inverse of $a)$. Then
\[
w(b) = -b^2 w(a).
\]
\end{Lemma}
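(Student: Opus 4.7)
The plan is to run the usual ``differentiate $ab=1$'' argument, with one preliminary step that extracts the ordinary Leibniz rule and the identity $w(1)=0$ from the chain-rule formulation of a $C^\infty$-ring derivation.

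First I would specialize the chain rule to $g(x,y)=xy\in C^\infty(\R^2)$. By Remark~\ref{rmrk:r-alg}, the product in the underlying $\R$-algebra of $\scA$ is, by definition, $g_\scA(a,b)=ab$, and since $\partial_1 g(x,y)=y$ and $\partial_2 g(x,y)=x$, the $C^\infty$-derivation axiom gives the familiar Leibniz rule
\[
w(ab)=b\cdot w(a)+a\cdot w(b).
\]
Applying the chain rule instead to the constant function $g\equiv 1\in C^\infty(\R)$ (with $n=1$ and any choice of argument, say $a$) yields $w(1)=(\partial g)_\scA(a)\cdot w(a)=0$, because $\partial g\equiv 0$.

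Combining these two observations and applying $w$ to $ab=1$ gives $0=b\,w(a)+a\,w(b)$, hence $a\,w(b)=-b\,w(a)$. Multiplying on the left by $b$ and using the commutativity of $\scA$ together with $ba=1$ produces $w(b)=-b^2 w(a)$, as claimed. There is no substantive obstacle; one need only recognize that the $C^\infty$-ring version of the derivation axiom already contains the ordinary product rule and $w(1)=0$ as special cases, so the usual one-line proof goes through verbatim.
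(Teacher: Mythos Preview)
Your proof is correct and follows exactly the paper's approach: apply $w$ to $ab=1$, use $w(1)=0$ and the Leibniz rule to get $0=w(a)b+a\,w(b)$, then multiply by $b$. The only difference is cosmetic---you spell out how the ordinary product rule and $w(1)=0$ follow from the $C^\infty$-ring chain-rule axiom, whereas the paper simply invokes ``$w$ is a derivation'' and leaves those facts implicit.
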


\begin{proof}
Since $w$ is a derivation, $w(1) = 0$. Hence $0 = w(ab) = w(a) b+ a w(b)$
and the result follows.
\end{proof}

\begin{Lemma}
Let $M \subset \R^n$ be a subset with the induced differential structure $\scF$, $W\subset \R^n$ an open neighborhood
of $M$ and $v\colon \scF\to \scF$ a derivation. Then for any
function $h\in \cin(W)$
\begin{equation} \label{eq:B.1}
v(h|_M) = \sum_{i=1}^n (\partial_i h)|_M \cdot v(x_i|_M),
\end{equation}
where $x_1, \dots, x_n\colon \R^n\to \R$ are the standard coordinate functions.
\end{Lemma}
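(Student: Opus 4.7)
The plan is to verify \eqref{eq:B.1} pointwise at each $p \in M$ by reducing the problem to a globally smooth function on $\R^n$ and then invoking the chain rule for $\cin$-ring derivations. Fix $p \in M$, and choose $\rho \in \cin(\R^n)$ with compact support contained in $W$ and $\rho \equiv 1$ on an open neighborhood $V \subset W$ of $p$. Setting $\tilde h := \rho h$ (extended by zero off $W$) produces a function $\tilde h \in \cin(\R^n)$ with $\tilde h = h$ on $V$ and $\partial_i \tilde h(p) = \partial_i h(p)$ for every $i$.

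Because $\tilde h$ belongs to $\cin(\R^n)$, the element $\tilde h|_M \in \scF$ is literally the value of the $\cin$-ring operation $\tilde h_\scF$ on $(x_1|_M, \ldots, x_n|_M)$. By Yamashita's theorem, recalled in Appendix~\ref{app:derivations}, any $\R$-algebra derivation of the point-determined $\cin$-ring $\scF$ is automatically a $\cin$-ring derivation, so the chain rule applies and gives
\[
v(\tilde h|_M) = \sum_{i=1}^n (\partial_i \tilde h)|_M \cdot v(x_i|_M).
\]

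To pass from this identity (for $\tilde h$) to the desired one (for $h$) at the point $p$, the key step is locality of $v$: if $f_1, f_2 \in \scF$ agree on an open neighborhood of $p$ in $M$, then $v(f_1)(p) = v(f_2)(p)$. To see this, restrict a $\cin(\R^n)$ bump function to obtain $\eta \in \scF$ with $\eta \equiv 1$ near $p$ and $\supp \eta \cap M$ contained in the agreement neighborhood; then $\eta(f_1 - f_2) = 0$ in $\scF$, Leibniz gives $v(\eta)(f_1 - f_2) + \eta \cdot v(f_1 - f_2) = 0$, and evaluating at $p$ with $\eta(p) = 1$ and $(f_1 - f_2)(p) = 0$ forces $v(f_1)(p) = v(f_2)(p)$. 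Applying this locality to $f_1 = \tilde h|_M$, $f_2 = h|_M$, and using $\partial_i \tilde h(p) = \partial_i h(p)$, the chain-rule identity above reproduces \eqref{eq:B.1} at $p$. Since $p$ was arbitrary, \eqref{eq:B.1} holds in $\scF$. The only nontrivial ingredient is the locality step; the rest reduces to a direct application of the $\cin$-ring chain rule.
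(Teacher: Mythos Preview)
Your proof is correct, and it takes a genuinely different route from the paper's. The paper argues globally: it invokes the localization theorem of Mu\~noz--Ortega (that every $h\in\cin(W)$ can be written as $k|_W/\ell|_W$ with $k,\ell\in\cin(\R^n)$ and $\ell|_W$ invertible), handles the restriction case directly via the $\cin$-ring chain rule, and then uses the quotient rule (Lemma~\ref{lem:B.2}) to reduce the general case to that one. Your argument is instead pointwise: you multiply $h$ by a bump function to get a global $\tilde h\in\cin(\R^n)$, apply the chain rule to $\tilde h$, and transfer the identity back to $h$ at each $p$ via the locality of derivations, which you prove from Leibniz and a second bump function. Your approach is more elementary in that it avoids the localization theorem entirely (a nontrivial structural fact about $\cin(W)$), at the cost of introducing and proving the locality lemma; the paper's approach yields the identity in $\scF$ in one stroke without a pointwise passage, but leans on that external reference. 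Both are clean; yours is closer in spirit to the standard manifold argument.
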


\begin{proof}
If $h = k|_W$ for some function $k\in \cin(\R^n)$, then
\[
h|_M = k|_M = (k\circ (x_1,\dots, x_n))|_M = k_{\scF} (x_1|_M,\dots x_n|_M).
\]
Hence, since $v$ is a $\cin$-ring derivation,
\[
v(h|_M) = v(k_{\scF} (x_1|_M,\dots, x_n|_M)) = \sum_{i=1}^n
(\partial_i k)|_M \cdot v(x_i|_M) = \sum_{i=1}^n (\partial_i h)|_M
\cdot v(x_i|_M)
\]
and \eqref{eq:B.1} holds for such a function $h$.

Otherwise by the localization theorem \cite{MO}\footnote{For an exposition of this proof in English, see~\cite{NGSS}.} there exist functions
$k,\ell \in \cin(\R^n)$ with $\ell|_W $ invertible in $\cin(W)$ so that $h
= \frac{k|_W}{\ell|_W}$. Then $h|_M = k|_M(\ell|_M)\inv$ and
therefore,
\begin{align*}
 v(h|_M) &= v\big(k|_M(\ell|_M)\inv\big) = v(k|_M) \cdot (\ell|_M)\inv - k|_M
 (\ell|_M)^{-2} v(\ell|_M)
 \intertext{\textrm{(by Lemma~\ref{lem:B.2})}}
 & = \sum_i \left( (\partial_i k)|_M (\ell|_M)\inv - k|_M
 (\ell|_M)^{-2} (\partial_i \ell)|_M\right) \cdot v(x_i|_M)
 \intertext{\textrm{(since \eqref{eq:B.1} holds for $k|M$ and $\ell|_M$)}}
 &= \sum_i
 \left. \partial_i\left(\frac{k|_W}{\ell|_W}\right)\right|_M\cdot v(x_i|_M)
 = \sum_{i=1}^n (\partial_i h)|_M \cdot v(x_i|_M).\tag*{\qed}
\end{align*} \renewcommand{\qed}{}
\end{proof}

\begin{Lemma} \label{lem:B.4}
Let $M\subset \R^n$ be a subset, $\scF$ the induced differential
structure on $M$ and ${v\colon\scF\to \scF}$ a derivation. For
any point $p\in M$, there exists a unique maximal integral curve
$\gamma\colon I\to M$ of $v$ with $\gamma(0) =p$.
\end{Lemma}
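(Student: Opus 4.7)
The plan is to reduce integration of $v$ on $M$ to the classical existence and uniqueness theorem for ODEs on open subsets of $\R^n$, using the coordinate functions as intermediaries. Set $V_i := v(x_i|_M) \in \scF$ for $i = 1,\dots,n$. I first establish the characterization that a smooth map $\gamma\colon J \to M$ is an integral curve of $v$ if and only if, viewed through the inclusion $M \hookrightarrow \R^n$, its components satisfy the first-order system
\[
\dot\gamma_i(t) = V_i(\gamma(t)), \qquad i = 1,\dots, n, \quad t \in J.
\]
The ``only if'' direction follows by plugging $f = x_i|_M$ into the integral curve condition. For the ``if'' direction, fix $f \in \scF$ and $t_0 \in J$. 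By the explicit local description of the subspace differential structure, there exist an open neighborhood $U$ of $\gamma(t_0)$ in $\R^n$ and $h \in \cin(U)$ with $f|_{M \cap U} = h|_{M \cap U}$. The chain rule in $\R^n$, together with the hypothesis $\dot\gamma = V \circ \gamma$, gives $\frac{d}{dt}\big|_{t_0}(f \circ \gamma) = \sum_i (\partial_i h)(\gamma(t_0)) \cdot V_i(\gamma(t_0))$; formula \eqref{eq:B.1} applied to $h$, combined with the fact that the derivation $v$ on $\scF$ is a local operator (a consequence of the existence of bump functions in $\scF$ from Lemma~\ref{lem:bump}, via the standard product-rule argument on $\rho\cdot(f - h|_M)$), identifies the same expression with $v(f)(\gamma(t_0))$.

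\textbf{Existence.} Given $p \in M$, choose an open neighborhood $U$ of $p$ in $\R^n$ and smooth functions $\tilde V_i \in \cin(U)$ with $\tilde V_i|_{M \cap U} = V_i|_{M \cap U}$; such extensions exist because each $V_i \in \scF = \langle \cin(\R^n)|_M\rangle$. Let $\tilde\gamma\colon (-\epsilon, \epsilon) \to U$ be the classical integral curve of the smooth vector field $\tilde v := \sum \tilde V_i \partial_i$ on $U$ with $\tilde\gamma(0) = p$, and let $I$ be the connected component of $0$ in $\{t \in (-\epsilon,\epsilon) : \tilde\gamma(t) \in M\}$. Then $\gamma := \tilde\gamma|_I$ takes values in $M$, is smooth into $(M,\scF)$ by Lemma~\ref{lem:diff_subspaces_embed} (it factors a smooth map into $\R^n$ through the subspace $M$), and satisfies the first-order system on $I$, hence is an integral curve of $v$ starting at $p$ by the characterization. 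If $I$ collapses to $\{0\}$, we obtain the admissible one-point integral curve.

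\textbf{Uniqueness and maximality.} If $\gamma_1, \gamma_2 \colon J \to M$ are two integral curves of $v$ starting at $p$, both satisfy $\dot x = V(x)$ in $\R^n$. Around any point $t_0 \in J$ where they coincide, using a local extension of the $V_i$ to an open neighborhood of $\gamma_1(t_0) = \gamma_2(t_0)$ in $\R^n$, classical ODE uniqueness forces local agreement; the set $E \subseteq J$ on which $\gamma_1$ and $\gamma_2$ agree on a neighborhood is open by definition and closed by continuity together with this local uniqueness, so $E = J$. Consequently any two integral curves starting at $p$ agree on their common domain, the union of all such domains is an interval $I_{\max}$ containing $0$, and the common values assemble into an integral curve $\gamma_{\max}\colon I_{\max} \to M$ (being an integral curve is a local condition), which is by construction maximal and unique. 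The step I expect to be the main obstacle is the ``if'' direction of the initial characterization, because it requires establishing that $v$ is a local operator on $\scF$; once that locality (from bump functions) is secured, the rest is a clean transcription of classical ODE theory to the subset $M \subseteq \R^n$.
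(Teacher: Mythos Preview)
Your argument is correct, but it takes a longer route than the paper's. The key difference is that the paper exploits Lemma~\ref{lem:2.27} to extend each $V_i = v(x_i|_M)$ to a smooth function $b_i$ on an open neighborhood of \emph{all} of $M$ in $\R^n$, obtaining a single global vector field $V = \sum b_i\,\partial_i$ on that neighborhood. With this in hand, the maximal integral curve $\tilde\gamma$ of $V$ through $p$ does all the work: $\gamma = \tilde\gamma|_I$ (with $I$ the connected component of $0$ in $\tilde\gamma^{-1}(M)$) is shown to be an integral curve of $v$ directly via \eqref{eq:B.1}, and maximality is immediate because any other integral curve $\sigma$ of $v$ is an integral curve of $V$, hence a restriction of $\tilde\gamma$. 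No locality-of-$v$ argument, no open/closed patching, and no union-of-domains construction is needed.

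Your approach instead uses only local extensions of the $V_i$, which forces you to (i) prove that $v$ is a local operator in order to make sense of ``\eqref{eq:B.1} applied to a locally defined $h$'', and (ii) run a connectedness argument for uniqueness and a union argument for maximality. These extra steps are sound, and your method has the virtue of generalizing to subcartesian spaces that are only locally embeddable; but in the present global setting $M\subset\R^n$ the paper's single-extension trick is both shorter and cleaner. One small imprecision worth tightening: when you invoke \eqref{eq:B.1} for your local $h$, note that \eqref{eq:B.1} as stated requires $h$ to be defined on a neighborhood of all of $M$; you should explicitly pass to a bump-cutoff extension $\tilde h\in\cin(\R^n)$ agreeing with $h$ near $\gamma(t_0)$, apply \eqref{eq:B.1} to $\tilde h$, and then use the locality of $v$ you established.
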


\begin{proof}
By definition of the induced differential structure $\scF$ on $M$, the
restrictions $x_i|_M$, ${1\leq i\leq n}$, are in~$\scF$. Here as before
$x_1,\dots,x_n\colon \R^n\to \R$ are the standard coordinate functions.
Then the functions $v(x_i|_M) $ are also in $\scF$, so there are open
neighborhoods~$U_i$ of~$M$ in~$\R^n$ and~$b_i\in \cin(U_i)$ with
$b_i|_M = v(x_i|_M)$ (cf.\ Lemma~\ref{lem:2.27}). Let $U
=\bigcap_{1\leq i\leq n}U_i$, $V:= \sum_i^n b_i
\frac{\partial}{\partial x_i}$. Then $V$ is a vector field on $U$. Let $\tilde{\gamma}\colon J\to \R^n$ be the unique
maximal integral curve of the vector field $V$ with~$\gamma(0)=p$.
Let $I$ denote the connected component of the set $(\tilde{\gamma})\inv (M)$
that contains 0. We now argue that $\gamma:= \tilde{\gamma}|_I$ is the desired maximal
integral curve of the derivation~$v$. Note that since the
image of $\gamma $ lands in $M$, the map $\gamma$ is smooth as a
map from $(I,\cin(I))$ into the differential subspace $(M,\scF)$ (see
Lemma~\ref{lem:diff_subspaces_embed}).

If $I$ is the singleton $\{0\}$, there is nothing to prove. So
suppose $I\not = \{0\}$.
Given $f\in \scF$ there is an open neighborhood $W$ of $M$ in $\R^n$
and a smooth function $h\in \cin(W)$ with $f = h|_M$ (see Lemma~\ref{lem:2.27}).
By replacing $W$ with $W\cap U$ if necessary,
we may assume that $W\subset U$. Note that for any $t\in
I$, $\gamma(t) = \tilde{\gamma}(t)$. We now compute
\begin{align*}
\frac{\rm d}{{\rm d}t} (f\circ \gamma) (t) &= \frac{\rm d}{{\rm d}t} (h\circ \tilde{\gamma})
(t) = V(h) (\gamma(t))
\intertext{\textrm{(since $\tilde{\gamma}$ is an
 integral curve of $V$)}}
&= \sum_i (\partial_i h) (\tilde{\gamma}(t)) \cdot b_i(\tilde{\gamma}(t) )
\intertext{\textrm{(by definition of $V$)}}
&= \sum_i (\partial_i h) (\gamma(t)) \cdot v(x_i|_M) (\gamma(t) )
\intertext{\textrm{(by definition of $b_i$'s)}}
&=v(h|_M) (\gamma(t))
\intertext{\textrm{(by \eqref{eq:B.1})}}
&= v(f) (\gamma(t)).
\end{align*}
Since $f\in \scF$ is arbitrary, the curve
$\gamma$ is an integral curve of the derivation $v$.

We now argue that $\gamma$ is a {\em maximal} integral curve of $v$.
Let $\sigma\colon K\to M$ be another integral
curve of $v$ with $\sigma(0)= p$. We first check that $\sigma$ is an integral curve
of the vector field $V$ on $W$. Note that since the inclusion
$M\hookrightarrow W$ is smooth, $\sigma\colon K\to W$ is smooth.
Consider $h\in \cin(W)$.
Then for any $t\in K$
\begin{align*}
 \frac{\rm d}{{\rm d}t}( h\circ \sigma)(t) &= \frac{\rm d}{{\rm d}t}( (h|_M)\circ
 \sigma)(t) =v (h|_M) (\sigma(t))
\intertext{\textrm{(since $\sigma$ is an integral curve of $v$)}}
&= \sum_i (\partial_i h)|_M (\sigma(t)) \cdot v(x_i|_M) (\sigma(t))
\intertext{\textrm{(by \eqref{eq:B.1})}}
&=\sum_i (\partial_i h) (\sigma(t)) \cdot b_i (\sigma(t) )
\intertext{\textrm{(by definition of $b_i$'s)}}
&= V(h)(\sigma(t)).
\end{align*}
Hence $\sigma$ is an integral curve of the vector field $V$ as claimed.

Since $\tilde{\gamma}$ is the maximal integral curve of $V$,
$\sigma = \tilde{\gamma} |_K$ and $K\subset \gamma\inv (M)$. Since
$0\in K$, $K$~is connected and since $I$ is the connected component of
0 in $\gamma\inv (M)$, $K\subset I$. It follows that
$\sigma = (\tilde{\gamma}|_I)|_K =\gamma |_K$ and therefore
$\gamma = \tilde{\gamma}|_I$ is the maximal integral curve of the derivation $v$.
\end{proof}

We record two corollaries.
\begin{Corollary} \label{cor:B.5} Let $(M,\scF)$ be a second countable subcartesian
 space of bounded dimension $($so that the assumptions of the Whitney
 embedding theorem for subcartesian spaces apply, see Remark~{\rm \ref{rmrk:3.13}} and the footnote$)$. Then
 for any
 derivation $v\colon \scF\to \scF$ and for any point $p\in M$ there is a unique
 maximal integral curve $\gamma_p$ of $v$ with $\gamma_p(0)= p$.
\end{Corollary}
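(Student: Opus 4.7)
The plan is to reduce to Lemma \ref{lem:B.4} via the Whitney-type embedding theorem for subcartesian spaces (Theorem \ref{thm:whitney} in Remark \ref{rmrk:3.13}). Under the hypotheses of the corollary, that theorem furnishes an embedding $\varphi\colon (M,\scF) \hookrightarrow (\R^N, \cin(\R^N))$ for some $N$, which by the definition of an embedding factors as a diffeomorphism $\psi\colon (M,\scF) \to (N', \scF')$ onto $N' := \varphi(M) \subset \R^N$ equipped with its induced differential structure $\scF'$, followed by the inclusion $N' \hookrightarrow \R^N$.

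Next, I would transport the derivation across $\psi$. Since $\psi$ is a diffeomorphism, the pullback $\psi^*\colon \scF' \to \scF$ is a $\cin$-ring isomorphism, and I define $v'\colon \scF' \to \scF'$ by $v' := (\psi^*)^{-1} \circ v \circ \psi^*$. A one-line check using that $\psi^*$ commutes with the $\cin$-ring operations shows that $v'$ is a $\cin$-ring derivation. Lemma \ref{lem:B.4}, applied to the subset $N' \subset \R^N$, the induced structure $\scF'$, the derivation $v'$, and the point $\psi(p) \in N'$, then produces a unique maximal integral curve $\gamma'\colon I \to N'$ of $v'$ with $\gamma'(0) = \psi(p)$.

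I would then set $\gamma := \psi^{-1} \circ \gamma'\colon I \to M$. For any $f \in \scF$, writing $f = \psi^* f'$ with $f' \in \scF'$ (using bijectivity of $\psi^*$), the chain of equalities
\[
\frac{\rm d}{{\rm d}t}(f \circ \gamma) = \frac{\rm d}{{\rm d}t}(f' \circ \gamma') = v'(f') \circ \gamma' = v(f) \circ \gamma
\]
shows that $\gamma$ is an integral curve of $v$ starting at $p$. Conversely, any integral curve $\sigma\colon K \to M$ of $v$ with $\sigma(0) = p$ yields $\psi \circ \sigma\colon K \to N'$, which the same computation identifies as an integral curve of $v'$ starting at $\psi(p)$; maximality of $\gamma'$ then forces $K \subset I$ and $\sigma = \gamma|_K$. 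This gives both existence and uniqueness of the maximal integral curve.

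The main obstacle here is not analytic but citational: one must invoke the nontrivial Whitney-type embedding theorem of Breuer--Marshall, Kowalczyk, or Motreanu to move from an abstract second countable, finite-structural-dimension subcartesian space to a subset of a Euclidean space. Once that is in hand, the rest is purely formal bookkeeping, because every notion involved --- $\cin$-ring, derivation, integral curve, maximality --- is invariant under diffeomorphisms of differential spaces, so Lemma \ref{lem:B.4} transfers without loss.
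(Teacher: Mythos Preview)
Your proposal is correct and is exactly the argument the paper has in mind: the paper states Corollary~\ref{cor:B.5} immediately after Lemma~\ref{lem:B.4} as a recorded consequence, with no written proof, since the reduction to Lemma~\ref{lem:B.4} via the Whitney-type embedding theorem (Theorem~\ref{thm:whitney}) is meant to be evident. Your explicit transport of the derivation and the integral-curve correspondence along the diffeomorphism $\psi$ simply spells out this implicit step.
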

The second corollary is really the corollary of the {\em proof} of
Lemma~\ref{lem:B.4}.

\begin{Corollary} \label{cor:3.20}
Let $M\subset \R^n$ be a subset, $\scF$ induced differential
structure on $M$ and ${v\colon \scF\to \scF}$ a derivation. There exists an
open neighborhood $U$ of $M$ in $\R^n$ and a vector field $V$ on $U$
so that for any $p\in M$ the maximal integral curve $\gamma_p\colon {I_p}
\to M$ of $v$ with $\gamma_p(0)=p$ is of the form~$\tilde{\gamma}_p|_{I_p}$ for the
maximal integral curve $\tilde{\gamma}_p\colon J_p\to U$ of $V$ with $\tilde{\gamma}(0)=p$.
\end{Corollary}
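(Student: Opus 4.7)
The plan is to observe that the proof of Lemma~\ref{lem:B.4} already produces data $(U,V)$ that does not depend on the initial point, so that the corollary is essentially extracted by repeating that construction once, rather than once per $p\in M$. Concretely, I would choose the ambient open neighborhood and ambient vector field first, and then let $p$ vary.

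First, using the standard coordinate functions $x_1,\dots,x_n\colon \R^n\to \R$, I note that $v(x_i|_M)\in \scF$ for each $i$. By Lemma~\ref{lem:2.27}, there exist open neighborhoods $U_i$ of $M$ in $\R^n$ and functions $b_i\in \cin(U_i)$ with $b_i|_M = v(x_i|_M)$. Set
\[
U:=\bigcap_{i=1}^n U_i, \qquad V:=\sum_{i=1}^n b_i\,\frac{\partial}{\partial x_i}.
\]
Then $U$ is an open neighborhood of $M$ in $\R^n$ and $V$ is a smooth vector field on $U$, neither of which depends on any choice of point in $M$.

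Next, for each $p\in M$, classical ODE theory on the open set $U\subset \R^n$ yields a unique maximal integral curve $\tilde{\gamma}_p\colon J_p \to U$ of $V$ with $\tilde{\gamma}_p(0)=p$. Define $I_p$ to be the connected component of $\tilde{\gamma}_p^{-1}(M)\subset J_p$ containing $0$, and set $\gamma_p := \tilde{\gamma}_p|_{I_p}\colon I_p\to M$. The computations performed in the proof of Lemma~\ref{lem:B.4} - showing that $\gamma_p$ is an integral curve of $v$ via formula~\eqref{eq:B.1}, and conversely that any integral curve of $v$ starting at $p$ is itself an integral curve of $V$, hence is a restriction of $\tilde{\gamma}_p$ and therefore contained in $\gamma_p$ - go through verbatim, since they never exploited any specific feature of $p$. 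Thus $\gamma_p$ is the unique maximal integral curve of $v$ with $\gamma_p(0)=p$.

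There is no genuine obstacle here: the content of the statement is precisely the observation that the construction of $U$ and $V$ in the proof of Lemma~\ref{lem:B.4} is uniform in the starting point, so the same pair $(U,V)$ simultaneously realizes every maximal integral curve of $v$ on $M$ as the restriction of the corresponding maximal integral curve of $V$ on $U$.
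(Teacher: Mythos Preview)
Your proposal is correct and matches the paper's approach exactly: the paper simply remarks that this corollary is a corollary of the \emph{proof} of Lemma~\ref{lem:B.4}, and your write-up is precisely the extraction of the point-independent construction of $(U,V)$ from that proof. There is nothing to add.
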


\begin{proof}[Proof of Theorem~\ref{thm:main}]
 It is no loss of generality to assume that $M\subset \R^n$ and that
 the differential structure $\scF$ on $M$ is the subspace
 differential structure: $\scF = \langle \cin(\R^n)|_M\rangle$.
By Corollary~\ref{cor:3.20}, there is an open neighborhood $U$ of $M$
in $\R^n$ and a vector field $V$ on $U$
so that for any $p\in M$ the maximal integral curve $\gamma_p\colon {I_p}
\to M$ of $v$ with $\gamma_p(0) =p$ is of the form $\tilde{\gamma}_p|_{I_p}$ for the
maximal integral curve $\tilde{\gamma}_p\colon J_p\to U$ of $V$. Let
\[
W = \bigcup_{p\in M} \{p\} \times I_p \subset M\times \R \subset
U\times \R.
\]
Note that by definition $M\times \{0\}\subset W$.
Define the map $\Phi\colon W\to M$ by $\Phi(p, t) = \gamma_p(t)$ for all~$(p,t)\in W$. Then $\Phi$ is a flow of $v$ modulo the issue
of smoothness which we now address.

The vector field $V$ on $U$ has the flow
\[
\Psi\colon\
 \widetilde{W}\to U,\qquad \Psi(x,t) := \tilde{\gamma}_x(t),
\]
where, as above, $\tilde{\gamma}_x\colon J_x\to U$ is the maximal integral
curve of $V$ with $\tilde{\gamma}(0)=x$ and
\[
\widetilde{W} := \bigcup_{x\in U}
\{x\} \times J_x.
\]
 For any point $p\in M$,
\[
\Psi|_{\{p\}\times I_x} = \Phi|_{\{p\}\times I_x}
\]
(since $\tilde{\gamma}_p|_{I_p} = \gamma_p$). Therefore, $\Phi = \Psi|_W$, hence smooth with respect to the differential structure $\langle
\cin(U\times \R)|_W\rangle$ on $W$ induced by the inclusion
$W\hookrightarrow U\times \R$.

It remains to show that $\langle
\cin(U\times \R)|_W\rangle =\langle \scF_{\prod}|_W\rangle$, where
$\scF_{\prod}$ is the product differential structure on $M\times \R$.
Note that $\scF_{\prod}$ depends only on the differential structures on
$M$ and $\R$.

By Lemma~\ref{lem:2.33}, $\scF_{\prod} = \langle \cin(U\times
\R)|_{M\times \R} \rangle$. By Corollary~\ref{cor:2.27}, \[\langle \langle \cin(U\times
\R)|_{M\times \R} \rangle|_W\rangle = \langle
\cin(U\times \R)|_W\rangle.\] Therefore, $\langle
\scF_{\prod}|_W\rangle= \langle
\cin(U\times \R)|_W\rangle.$
\end{proof}

\begin{Example}
Let $(M,\omega)$ be a symplectic manifold with a Hamiltonian action of
a compact Lie group $G$ and let $\mu\colon M\to \fg^*$ denote the
corresponding equivariant moment map. Assume that the action of $G$ on $M$ has
only finitely many orbit types (this is the case, for example, when~$M$ is the cotangent bundle of a compact manifold or when $M$ itself is compact). Recall that the
symplectic quotient at $0\in \fg^*$ is defined to be the subquotient
\[
M_0:= \mu\inv(0)/G.
\]
Let $\pi\colon \mu\inv (0) \to M_0$ denote the quotient map. The
symplectic quotient $M_0$ can be given the structure of a differential
space. Namely let $\cin(M)^G$ denote the space of
$G$-invariant functions. It is easily seen to be a $\cin$-subring of
$\cin(M)$. We define
\[
\scF:= \big\{ f\colon M_0\to \R\mid f\circ \pi = \tilde{f}|_{\mu\inv(0)}
\textrm{ for some } \tilde{f}\in \cin(M)^G\big\}.
\]
This idea goes back to the work of Cushman \cite{Cush}. It is not
hard to check that $\scF$ is a differential structure on $M_0$. For
instance, this follows from the existence of the desired bump
functions. See also \cite{Sn}.

By \cite[Example~6.6]{SL}, the differential space $(M_0, \scF)$ is embeddable. Consequently,
any derivation of $\scF$ has a unique smooth flow.
\end{Example}

\appendix

\section[R-algebra and C\^{}infty-ring derivations of differential
 structures]{$\boldsymbol{\R}$-algebra and $\boldsymbol{\cin}$-ring derivations of differential
 structures} \label{app:derivations}

The goal of this appendix is to prove that for any point-determined
$\cin$-ring $\scC$ (see Definition~\ref{def:pd}) any $\R$-algebra
derivation $v\colon \scC\to \scC$ is automatically a $\cin$-ring
derivation. We start by defining $\R$-points of $\cin$-rings.

\begin{Definition} \label{def:r-point}
 An {\it $\R$-point} of a $\cin$-ring $\scC$ is a
 nonzero homomorphism $\varphi\colon\scC\to \R$ of $\cin$-rings.
 \end{Definition}
\begin{Definition} \label{def:pd}
 A $\cin$-ring $\scC$ is {\it point-determined} if $\R$-points
 separate elements of the ring. That is for any $a\in
 \scC$, $a\not =0$ there is an $\R$-point $\varphi\colon\scC\to \R$ with
$\varphi (a) \not= 0$.
\end{Definition}

\begin{Example} Let $(M, \scF)$ be a differential space and $x\in M$
a point. Then the evaluation map
\[
\operatorname{ev}_x\colon\ \scF \to \R, \qquad \operatorname{ev}_x (f) := f(x)
\]
is an $\R$-point of $\scF$. The $\cin$-ring $\scF$ is
point-determined since for any nonzero function $f\in \scF$ there is a
point $x\in M$ with $0\not = f(x) = \operatorname{ev}_x (f)$.
\end{Example}

We next recall Hadamard's lemma.
\begin{Lemma}[Hadamard's lemma]\label{lem:Had}
For any smooth function $f\colon\R^n \to \R$,
there exist $($non-unique$)$ smooth functions $g_1,\dots, g_n \in C^\infty (\R^n \times \R^n) $
such that
\[
f(x) - f(y) = \sum_{i=1}^n (x_i - y_i) g_i(x,y)
\]
for any pair of points $x,y\in \R^n$.

Moreover, for $n$-tuple of functions $h_1,\dots, h_n\in C^\infty
(\R^n \times \R^n)$ with the property that \[
f(x) - f(y) = \sum_{i=1}^n (x_i - y_i) h_i(x,y)\] for all $(x,y)\in
\R^n\times \R^n$ we have
\[
 h_i(b,b) = (\partial_i f) (b)
\]
for all $b\in \R^n$.
\end{Lemma}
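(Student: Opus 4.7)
The plan is to prove existence by the classical integral formula, and uniqueness of the diagonal values by a direct differentiation argument.

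For existence, I would define
\[
g_i(x,y) := \int_0^1 (\partial_i f)(y + t(x-y))\,\mathrm{d}t
\]
for each $i = 1,\dots,n$. Smoothness of $g_i$ on $\R^n\times\R^n$ follows from the standard theorem on differentiation under the integral sign, applied to the smooth integrand $(t,x,y)\mapsto (\partial_i f)(y+t(x-y))$ on the compact $t$-interval $[0,1]$. To verify the identity, I would apply the fundamental theorem of calculus to the smooth function $\varphi(t) := f(y + t(x-y))$ on $[0,1]$: by the chain rule $\varphi'(t) = \sum_{i=1}^n (x_i - y_i)(\partial_i f)(y+t(x-y))$, and integrating from $0$ to $1$ gives
\[
f(x) - f(y) = \varphi(1) - \varphi(0) = \sum_{i=1}^n (x_i - y_i)\, g_i(x,y).
\]

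For the ``moreover'' part, suppose $f(x) - f(y) = \sum_{i=1}^n (x_i - y_i)\, h_i(x,y)$ with $h_i\in C^\infty(\R^n\times\R^n)$. Fix $b\in \R^n$ and $y = b$, and differentiate both sides with respect to $x_j$ at $x=b$. The left side gives $(\partial_j f)(b)$. On the right, each term $(x_i - b_i) h_i(x,b)$ is smooth, so its $x_j$-derivative at $x=b$ equals $\delta_{ij} h_i(b,b) + (b_i - b_i)\,\partial_{x_j} h_i(b,b) = \delta_{ij}\, h_i(b,b)$. Summing over $i$ yields $h_j(b,b) = (\partial_j f)(b)$, as required.

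There is no real obstacle here; the only mild point is the smoothness of $g_i$, which is a routine application of the smoothness-under-integral-sign theorem for a parameter-dependent integrand with compact integration domain.
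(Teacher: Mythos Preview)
Your proof is correct and essentially the same as the paper's. For existence you use the identical integral formula (the paper writes the line segment as $tx+(1-t)y$, you as $y+t(x-y)$); for the ``moreover'' part the paper takes the difference quotient $\frac{1}{s}(f(b+se_i)-f(b))$ and lets $s\to 0$, while you differentiate the identity in $x_j$ at $x=b$ using the product rule---these are the same computation phrased two ways.
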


\begin{proof}
\begin{align*}
f(x) - f(y) =& \int_0^1 \frac{\rm d}{{\rm d}t}f(tx + (1-t) y) {\rm d}t= \int_0^1 \sum_{i=1}^n \partial_i f (tx + (1-t) y)(x_i - y_i) {\rm d}t\\
=& \sum_{i=1}^n (x_i - y_i)\int_0^1 \partial_i f (tx + (1-t) y) {\rm d}t.
\end{align*}
Define
\[
g_i (x,y) = \int_0^1 \partial_i f (tx + (1-t) y){\rm d}t.
\]
This proves existence of the desired functions $g_1,\dots, g_n$. To
prove the second part of the lemma, note that
\[
(\partial_i f)(b) = \lim_{s\to 0} \frac{1}{s} (f(b + s e_i) -
f(b)),
\]
where $e_i$ is the $i^{\rm th}$ standard basis vector. Therefore
if $f(x) - f(y) = \sum_{i=1}^n (x_i - y_i) h_i(x,y)$, then
\[
(\partial_i f)(b)= \lim_{s\to 0}
\frac{1}{s} \sum_{j=1}^n ((b+se_i)_j - b_j) h_j(b + s
e_i, b)=\lim_{s\to 0}\frac{1}{s} s h_i (b + s e_i, b) = h_i (b, b).\tag*{\qed}
\]\renewcommand{\qed}{}
\end{proof}

We are now in position to prove the main result of the appendix.
\begin{Theorem} \label{lem:4.5}
Let $\scA$ be a point-determined $\cin$ ring
and $v\colon \scA\to \scA$ an $\R$-algebra derivation.
Then $v$ is a $\cin$-derivation.
\end{Theorem}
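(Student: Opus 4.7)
The plan is to reduce the $\cin$-ring derivation identity to a scalar identity at each $\R$-point of $\scA$, and then to establish the scalar identity via Hadamard's lemma. Fix $n\geq 1$, $g\in \cin(\R^n)$, and $f_1,\ldots,f_n\in \scA$, and set
\[
D := v\bigl(g_\scA(f_1,\ldots,f_n)\bigr) - \sum_{i=1}^n (\partial_i g)_\scA(f_1,\ldots,f_n)\cdot v(f_i).
\]
Because $\scA$ is point-determined, it suffices to show $\varphi(D)=0$ for every $\R$-point $\varphi\colon \scA\to \R$.

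So fix such a $\varphi$ and put $b_i := \varphi(f_i)$, $b := (b_1,\ldots,b_n)\in \R^n$. Hadamard's lemma (Lemma~\ref{lem:Had}), with the second argument frozen at $b$, yields smooth functions $G_i(x):=g_i(x,b)\in \cin(\R^n)$ satisfying $G_i(b)=(\partial_i g)(b)$ and
\[
g(x)=g(b)+\sum_{i=1}^n (x_i-b_i)\,G_i(x)
\]
in $\cin(\R^n)$. Applying both sides as $\cin$-ring operations to $(f_1,\ldots,f_n)\in \scA^n$, and unpacking via the coordinate axiom, the identification of $0$-ary operations with real scalars, and the compatibility of the $\cin$-ring $+$ and $\cdot$ with the underlying $\R$-algebra structure (Remark~\ref{rmrk:r-alg}), I obtain the $\scA$-valued identity
\[
g_\scA(f_1,\ldots,f_n)=g(b)+\sum_{i=1}^n (f_i-b_i)\cdot (G_i)_\scA(f_1,\ldots,f_n).
\]

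Since $v$ is an $\R$-algebra derivation, $v$ kills scalars and satisfies Leibniz, so
\[
v\bigl(g_\scA(f_1,\ldots,f_n)\bigr)=\sum_{i=1}^n v(f_i)\cdot (G_i)_\scA(f_1,\ldots,f_n) + \sum_{i=1}^n (f_i-b_i)\cdot v\bigl((G_i)_\scA(f_1,\ldots,f_n)\bigr).
\]
Applying $\varphi$, the second sum vanishes term-by-term because $\varphi(f_i-b_i)=0$. As $\varphi$ is a $\cin$-ring homomorphism, $\varphi\bigl((G_i)_\scA(f_1,\ldots,f_n)\bigr)=G_i(b)=(\partial_i g)(b)$ and likewise $\varphi\bigl((\partial_i g)_\scA(f_1,\ldots,f_n)\bigr)=(\partial_i g)(b)$. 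Hence
\[
\varphi\bigl(v(g_\scA(f_1,\ldots,f_n))\bigr)=\sum_{i=1}^n (\partial_i g)(b)\cdot \varphi(v(f_i))=\varphi\!\left(\sum_{i=1}^n (\partial_i g)_\scA(f_1,\ldots,f_n)\cdot v(f_i)\right),
\]
that is, $\varphi(D)=0$, and point-determination gives $D=0$.

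The only real obstacle is justifying the passage from the Hadamard identity in $\cin(\R^n)$ to the displayed identity in $\scA$: one must keep careful track of how sums, products, constants, and coordinate projections in $\cin(\R^n)$ are realized by the $\cin$-ring operations on $\scA$, but the $\cin$-ring composition and coordinate axioms make this translation automatic. Everything after that is a short Leibniz-and-evaluate computation.
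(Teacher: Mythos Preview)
Your proof is correct and follows essentially the same route as the paper's: fix an $\R$-point, use Hadamard's lemma at the corresponding base point to expand $g_\scA(f_1,\ldots,f_n)$ in $\scA$, apply the $\R$-algebra Leibniz rule, evaluate at the $\R$-point to kill the second sum, identify $G_i(b)=(\partial_i g)(b)$, and conclude by point-determination. The only cosmetic difference is that the paper records the easy fact $v(1_\scA)=0$ explicitly before invoking ``$v$ kills scalars''.
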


\begin{proof}
Recall that if $\scA$ is a unital $\R$-algebra and $v\colon\scA \to \scA$
is an $\R$-algebra derivation,
then $v(1_\scA) = 0_\scA$ since
$v(1_\scA) = v\big(1_\scA^2\big) = 1_\scA v(1_\scA) +v(1_\scA) 1_\scA
 = v(1_\scA) + v(1_\scA)$.

Let $h\in \cin\big(\R^k\big)$ be a smooth function and $a_1,\dots, a_k\in \scA$.
Let $x\colon \scA\to \R$ be an $\R$-point. Then $b=(b_1,\dots, b_k):= (x(a_1), \dots, x(a_k))$
is a point in $\R^k$. By Hadamard's lemma (see Lemma~\ref{lem:Had}),
there are smooth functions $g_1, \dots, g_k\in \cin\big(\R^{2k}\big)$ such that
\[
h(y) = h(b) + \sum_{j=1}^k (y_j-b_j) g_j(y,b)
\]
for all $y\in \R^k$, and $g_j(b,b) = \partial_j h (b)$.
Let $\hat{g}_j(y) := g_j(y,b)$. Then, for any $(a_1,\dots,a_k) \in \scA^k$,
\[
 h_\scA(a_1,\dots, a_k)
 = h(b)_\scA + \sum_{j=1}^k (a_j-(b_j)_\scA)
 \cdot (\hat{g}_j)_\scA (a_1,\dots, a_k).
\]
Applying the algebraic derivation $v$ to both sides and using the fact
that $v$ applied to a scalar is zero, we get
 \[
 v( h_\scA(a_1,\dots, a_k)) = \sum_{j=1}^k
 v(a_j) (\hat{g}_j)_\scA (a_1,\dots, a_k)
 + \sum_{j=1}^k (a_j -(b_j)_\scA)
 v( (\hat{g}_j)_\scA (a_1,\dots, a_k).
\]
Now we apply the $\R$-point $x$ to both sides and use the fact
that $x( a_j -(b_j)_\scA) = x(a_j) -b_j =0$ for all $j$. We get
\[
x(v(h_\scA(a_1,\dots, a_k) )
 = \sum_{j=1}^k x(v(a_j))\cdot
 x \big( (\hat{g}_j)_\scA (a_1,\dots, a_k) \big) .
\]
Finally, note that for each $j$,
 \begin{align*}
 x \big( (\hat{g}_j)_\scA (a_1,\dots, a_k) \big)
 & = (\hat{g}_j)_{\cin(R)}\big( x(a_1),\dots, x(a_k) \big)
 \intertext{\textrm{(since $x$ is a homomorphism of $\cin$-rings)}}
 &=
 g_j(b_1,\dots, b_k, b_1,\dots, b_k)\\
 & =( \partial_j h) (b_1,\dots, b_k)
 = (\partial_j h) (x(a_1),\dots, x(a_k)) \\
 &= x \big( (\partial_j h)_\scA(a_1,\dots, a_k) \big)
\end{align*}
(since $x$ is a homomorphism of $\cin$-rings).
Therefore
 \begin{align*}
x \big( v(h_\scA(a_1,\dots, a_k)) \big)
 &= \sum_{j=1}^k x \big( v(a_j) \big) x
 \big( (\partial_j h)_\scA(a_1,\dots, a_k) \big)\\
 &= x \big(\sum_{j=1}^k (\partial_j h)_\scA(a_1,\dots, a_k) v(a_j) \big).
\end{align*}
Since $\scA$ is point determined and since the $\R$-point $x$ is arbitrary,
\[
v \big( h_\scA(a_1,\dots, a_k) \big)
 = \sum_{j=1}^k (\partial_j h)_\scA(a_1,\dots, a_k) v(a_j) ,
\]
i.e., $v$ is a $\cin$-ring derivation.
\end{proof}

\subsection*{Acknowledgements}
We thank Jordan Watts and Rui Fernandes for their help.
Y.K.'s research is partly funded by the Natural Science and Engineering
Research Council of Canada
and by the United States -- Israel Binational Science Foundation.
E.L.'s research is partially supported by the Air Force
Office of Scientific Research under award number FA9550-23-1-0337.

\pdfbookmark[1]{References}{ref}
\LastPageEnding

\end{document}